\newcommand{\field}[1]{\mathbb{#1}}
\newcommand{\R}{\field{R}}        
\newcommand{\N}{\field{N}}        
\newcommand{\Z}{\field{Z}}        
\newcommand{\C}{\field{C}}        
\newcommand{\ft}{Fourier transform}
\def\cPp{\mathcal{P}\!_p}
\def\cPh{\mathcal{P}\!_{h^{-1}}}
\def\cO{\mathcal{ O}}
\def\inv{^{-1}}
\DeclareMathOperator*{\supp}{supp}
\DeclareMathOperator{\DFTa}{\mathcal{F}}
\def\DFT{\DFTa}
\DeclareMathOperator*{\sinc}{sinc}
\DeclareMathOperator*{\sign}{sign}
\crefname{hypothesis}{Hypothesis}{Hypotheses}
\title{Quantitative estimates: How well does the discrete Fourier transform approximate the Fourier transform on $\R$?}
\author{Martin Ehler\thanks{Faculty of Mathematics, University of Vienna, Oskar-Morgenstern-Platz 1, A-1090 Vienna, Austria
  (\email{martin.ehler@univie.ac.at}, \url{https://homepage.univie.ac.at/martin.ehler/}).}
\and Karlheinz Gr\"ochenig\thanks{Faculty of Mathematics, University of Vienna, Oskar-Morgenstern-Platz 1, A-1090 Vienna, Austria
  (\email{karlheinz.groechenig@univie.ac.at}, \url{https://homepage.univie.ac.at/karlheinz.groechenig/}).}
\and Andreas Klotz\thanks{Acoustics Research Institute, Austrian Academy of Sciences,
Dominikanerbastei 16, A-1010 Vienna, Austria (\email{andreas.klotz@oeaw.ac.at}).}}
\begin{document}
\maketitle

\begin{abstract}
In order  to compute  the Fourier transform
of a function $f$ on the real line numerically, one samples  $f$ on a
grid and then takes the discrete Fourier transform. We derive exact
error estimates for this procedure in terms of the decay and
smoothness of $f$. The analysis provides an asymptotically optimal recipe of how to relate
the number of samples, the sampling interval, and the grid size. 
\end{abstract}

\begin{keywords}
Discrete Fourier transform, FFT, approximation rate,
   weight functions,  amalgam
space.
\end{keywords}

\begin{MSCcodes}
42QA38,65T05,94A12,43A15
\end{MSCcodes}

\section{Introduction}
The fast Fourier transform (FFT) is widely used in the applied
sciences for the  numerical approximation of  the Fourier transform 
\begin{equation}\label{eq:101}
\hat{f}(\xi)=\int_{-\infty}^\infty f(x)e^{-2\pi \mathrm{i}x\xi}\mathrm{d}x
\end{equation}
from sampled values of $f$. Despite the overwhelming  success of this
approximation,  there are surprisingly few rigorous investigations
and, in our view, 
still substantial theoretical
gaps when it comes to error estimates. As the FFT is a fast algorithm
that computes the discrete Fourier transform, we are actually asking:
\emph{how well does the discrete Fourier transform approximate the
  Fourier transform \cite{Epstein}?} 

\subsection{Questions}
Engineers and numerical practitioners compute the Fourier transform of
a function $f$ on the real line as follows: 
\begin{itemize}
\item[(i)] Sample $f$ on an interval $(-\frac{p}{2},\frac{p}{2}]$ of length $p$  at $n$ equispaced points
$hj$, for $j\in \Z $, $-\frac{n}{2} < j\leq \frac{n}{2}$, with step
size  $h$ and $p=hn$. 

\item[(ii)]  Approximate the Fourier transform of $f$ at
    $\tfrac{k}{p}$ by
\begin{equation}
  \label{eq:c1}
\hat{f}\big( \tfrac{k}{p} \big)\approx h\sum_{-\frac{n}{2}<j\leq \frac{n}{2}} f(hj)  e^{-2\pi
    \mathrm{i}\frac{k j}{n}}  \,,\qquad k\in\Z,\quad -\frac{n}{2}< k \leq \frac{n}{2} \, . 
\end{equation}
\end{itemize}
The right-hand side is the discrete Fourier transform of the
  sampled vector  $\big(f(hj )\big)_{j}$ of size $n$.  By
  choosing $n$ to be a power of $2$ or by zero-padding, the discrete Fourier
  transform can be computed with the FFT algorithm in 
  $\cO (n \log n) $ operations.  Since the \ft\ is a fundamental
  computational step   in a huge number of applications  in signal
  processing, acoustics, medical imaging, the  numerical solution of
  partial differential equations, in quantum mechanics, 
  etc., the availability of a fast algorithm is central  in
  scientific computing. Therefore much effort has gone into a speed up and
  efficient implementations of this algorithm~\cite[Chapter 12]{numerical recipes}. The FFT is
  hailed as  ``the most important numerical algorithm of our
  life-time''~\cite{Strang}, is included in the ten most important numerical
  algorithms of the 20th century~\cite{20thcentury}, and remains important in the age of data science and artificial intelligence.
  
Grounded on  the overwhelming  success of the FFT in 
approximating the \ft , the approximation \eqref{eq:c1}  is usually
taken for granted. The FFT is used as a black box, it  works superbly, and
it always seems to work. \\
\indent
Qualitatively, the approximation~\eqref{eq:c1} is easy to
understand. It is motivated by discretizing the integral and then
truncating the infinite series as follows: 
\begin{equation}\label{eq:1approx}
  \hat f (\xi) =
  \int_{-\infty}^\infty f(x)e^{-2\pi \mathrm{i}\xi x}\mathrm{d}x  \approx h\sum_{j\in\Z} f(hj) e^{-2\pi\mathrm{i}\xi hj }
  \approx h\sum_{-\frac{n}{2}<j\leq \frac{n}{2}} f(hj)
  e^{-2\pi\mathrm{i}\xi hj }\, . 
\end{equation}
For $\xi = \frac{k}{hn} = \frac{k}{p}$ one then obtains \eqref{eq:c1}. \\
\indent  Although this approximation is immensely successful,   one
must ask what is the precise relation between the computed 
values $h\sum_{-\frac{n}{2}<j\leq \frac{n}{2}}  f(hj)  e^{-2\pi
    \mathrm{i}\frac{k j}{n}}$
and the samples $\hat{f}(\frac{k}{p})$ of the actual 
Fourier transform.

Ideally one can provide reasonable error estimates for
this approximation, and we formulate two questions:
\begin{itemize}
 \item[(Q1)] The numerical procedure depends on the number of samples
   $n$,  the step size $h$, and  the length $p$ of the sampled interval
where $p=hn$. How should $h,n$, and $p$ be  chosen  dependent on the function
class of $f$?

\item[(Q2)]
What can be said about error rates and the asymptotic decay of the deviation in
\eqref{eq:c1} when $n \rightarrow \infty $, $p\rightarrow \infty$, and $h\to 0$? 
\end{itemize}

We will answer these questions for several classes of functions that
are described by (i) their decay property and (ii) by their
smoothness or by the decay of their Fourier transform. The decay of
$f$ determines the truncation error in~\eqref{eq:1approx}, and the decay of
$\hat{f}$ determines the discretization error in \eqref{eq:1approx}.

\subsection{State-of-the-art} 

The case of functions with compact support is related to the 
case of approximating Fourier coefficients of a periodic function and is fully
understood. Roughly, if $f$ has compact support and is $m$-times
differentiable, then the pointwise  error scales like $n^{-m}$ in
terms of length of the FFT, or like $h^m$ in terms of the grid
size. This is made precise by Epstein in \cite{Epstein}, whose title we
borrowed, for the
approximation of Fourier coefficients of periodic functions, and by 
Briggs and Henson in their ``Owner's Manual for the Discrete Fourier
 Transform''~\cite[Section 6]{Briggs} for the Fourier transform of
 compactly supported functions on $\R $. For $\supp f\subseteq
 [-\frac{p}{2},\frac{p}{2}]$ and  $h=\frac{p}{n}$,  these results are optimal and
 come with explicit constants. \\
\indent 
 The article~\cite{Becker96}
  computes an exact formula for the relative pointwise error for the
  \ft\ of a compactly supported $B$-spline of order $k$ (called ``canonical$-k$
functions'') and shows that in the limit $n\to \infty $ the relative error
is the same for $k+1$-times differentiable functions with compact
support, however,  no error estimates are given. \\
\indent 
To the best of our knowledge, there are surprisingly few
 investigations and even fewer quantitative error estimates in the literature that go beyond compactly supported functions.
For functions with unbounded support 
the
important cases of exponential decay and analyticity of $f$ are
covered by  the
work of  Briggs
and Henson~\cite[Chapter 6]{Briggs}
and of  Stenger \cite[Chapter 3.3]{StengerBook}.  \\
\indent 
The case of general input in \cite[Thm.~6.6]{Briggs} assumes that  $f$
satisfies $\sup_{x \in \mathbb{R}} |f(x)| e^{r|x|} < \infty$ and is
$m$-times differentiable satisfying some stringent boundary conditions
at $\pm p/2$. 
 Then for some  constants $c_1,c_2$ the
pointwise  error is 
\begin{equation}\label{eq:bound from briggs}
\Big| \hat{f}\big( \tfrac{k}{p} \big) - h \sum_{-\frac{n}{2}<j\leq \frac{n}{2}}f(hj)
  e^{-2\pi i \frac{kj}{n}} \Big| \leq c_1 e^{-r p/2} e^{-r p/n} + c_2
h^{m}, \qquad -\frac{n}{2}< k \leq \frac{n}{2} \, .
\end{equation}
The decay of order $h^{m}$ is as expected, and the implicit assumption
of bounded variation \cite{Briggs} even leads to $h^{m+1}$. The problem, however, is in the
boundary conditions. In the generic case, when the values of $f$ at
the endpoints of the sampled interval differ, i.e., $f(-p/2) \neq
f(p/2)$, then the predicted error is only of the order $h$. For
instance, for the exponential function $f(x) = e^{-2\pi |x|}$
\cite{Briggs} obtains the correct error estimate with $h^2$, but for
the shifted function $e^{-2\pi |x-1|}$ the predicted error is only
$h^1$, which does not match the numerical observations. 
  This phenomenon seems to be a limitation of the proof
technique in \cite{Briggs}. It is one of our contributions to obtain
the correct error estimate
without any boundary conditions. For more discussion see our Section~\ref{sec:mw}.

In the context of numerical sinc methods 
Stenger \cite[Chapter 3.3]{StengerBook}  uses  the
cardinal series to  approximate an exponentially decaying function
$f$ whose \ft\ also decays exponentially (to be precise, he uses a
slightly stronger condition formulated with a Hardy  space) 
and shows that the pointwise error satisfies
$$
\Big| \hat{f}\big( \tfrac{k}{p} \big) - h \sum_{-\frac{n}{2} < j \leq \frac{n}{2}} f(hj)
  e^{-2\pi i \frac{kj}{n}} \Big| \leq C e^{-s n^{1/2}} \, ,
$$
with suitable constants $C,s>0$. In addition,  \cite[Chapter
3.3]{StengerBook} contains error estimates for the $L^p$-error for
interpolated versions of the \ft.

Since the right-hand side of \eqref{eq:c1}  approximates the
integral $\int _{-p/2}^{p/2} f(x) e^{-2\pi \mathrm{i} x k/p} \mathrm{d} x$ by means of a
Riemann sum, one should mention the
recent error analyses~\cite{Eggert,Fornberg21,Tadmor,Trefethen14,Trefethen22}  for
the trapezoidal rule as relevant in this context although they do not 
mention the Fourier transform. These error estimates assume
exponential decay and analyticity of $f$. By contrast, the recent
results for the trapezoidal rule in \cite[Prop. 4.2, Thm 4.5]{Kaza} do
not require analyticity, but 
use Gaussian decay  and finite Sobolev smoothness of order $m$. For
$n$ samples of $f$ and the choice $
p=2\sqrt{\frac{2}{1-\epsilon} m \log n}
$ the error is 
of the order $n^{-m} (\log
n)^{m/2+1/4}$. \\
\indent In \cite{Denich} the trapezoidal rule is  used for error estimates of
  approximations of the Fourier transform of analytic functions via
  doubly exponential transforms. These  are rather different
from the usual approximation~\eqref{eq:c1}, but yield almost
exponential convergence.

An interesting variation was studied by Auslander and
Gr\"unbaum~\cite{Auslander89}. They consider  functions  $f\in
L^2(\R)$ and approximate averaged values of $\hat{f}$ from local averages
of $f$ via the discrete Fourier transform. For Gaussian smoothing they
derive an explicit  formula for the sharp  constant  for the resulting error
estimate and then plot the results of numerical simulations for FFTs
up to $n=100$ samples. Asymptotics for large $n$ is not treated.

\section{Our contributions} 
In this paper we derive estimates for the
error between the Fourier transform on the real line and the discrete Fourier
transform for a significantly broader class of functions than previously considered in the literature. 

  Specifically, we study  functions
  that decay only polynomially in time and in frequency.   
Our results confirm in a quantitative fashion  that, even under mild assumptions on the decay and
smoothness (decay in frequency), the standard approximation 
procedure with the FFT works well and is successful.
Our analysis also 
provides the new insight how the optimal spacing should depend on the
decay of $f$ and $\hat{f}$. As an answer to Question (Q1), we identify an optimal relation between the step size $h$, the number of samples $n$, and the length $p$ of the sampling interval. To answer Question (Q2), we derive precise error
estimates with explicit constants for function classes that are
significantly larger than exponentially decaying or analytic
functions. The optimality of the  error estimates  is 
confirmed by numerical simulations with  the FFT that  yield the precise
asymptotics predicted by  theoretical results. We believe that these error estimates are best possible, and 
 since all constants are explicit, these   estimates 
  cover  not only the asymptotic regime,  but also hold for small
  $n$.

We proceed with the detailed exposition of our results. 
\subsection{Error measures}
We first introduce the appropriate notation.
For $n\in\N$, set
\begin{equation*}
  [n]:=
  \{j\in\Z : -\tfrac{n}{2}<j\leq \tfrac{n}{2}\}\,.
\end{equation*}
The discrete Fourier transform $\DFT:\C^n\rightarrow\C^n$ of  $y=(y_j)_{j\in[n]}\in \C ^n$ is   
\begin{equation}\label{eq:DFT}
\DFT\! y = \bigg(\frac{1}{\sqrt{n}}\sum_{j\in[n]} y_j e^{-2\pi
  \mathrm{i}\frac{k j}{n}}\bigg)_{k\in[n]} \, .
\end{equation}
Define  the  scaled sampling of a function $f$ with step size $h$ and length $n$ as
\begin{equation*}
f_{h,n}:= \Big(\sqrt{h}f(hj)\Big) _{j\in[n]} \in \C^n\,.
\end{equation*}
Throughout the text we use the relation  $p=hn $ between the interval length $p$, the step size $h$, and the number of samples $n$.

We will deal mainly with the $\ell ^2$-error averaged over
  the sampled interval. This  approximation error is defined as  
\begin{equation}\label{eq:error original unique}
E^{[n]}_{h}(f):=\biggl(\frac{1}{p}\sum_{k\in [n]} \Big| \hat{f}(\tfrac{k}{p}) - h\sum_{j\in[n]} f(hj) e^{-2\pi\mathrm{i}\frac{kj}{n}}\Big|^2\bigg)^{1/2} = \big\| \hat{f}_{\frac{1}{p},n} - \DFT\! f_{h,n}\big\| \,.
\end{equation}
 The use of the $\ell^2$-error is natural in our context, since the discrete Fourier transform is unitary on $\C^n$ with respect to the $\ell^2$-norm. 
 The normalization of $f_{h,n}$ and the factor $\frac{1}{p}$ 
are motivated by 
 the remarkable time-frequency symmetry of the error
$$E^{[n]}_h(f) =
E^{[n]}_{p^{-1}}(\bar{\hat{f}}) \, ,
$$
see Lemma \ref{lemma:symmetry}. This symmetry aligns well with the
mathematical framework of time-frequency analysis in
\cite{GroechenigBook}  in which
both time and frequency are treated as equally important dimensions
for understanding signals.   
The symmetry between time and frequency also manifests in our error
estimates, and we believe that  the   unified
time-frequency  perspective  is a key reason why our estimates are sharp.

\subsection{Polynomial decay and finite smoothness}
Our main theorem will be proved for general  decay
conditions.  We first discuss the special case of  functions of polynomial decay and finite
smoothness. As remarked in ~\cite{Briggs}, finite smoothness is  often
replaced by polynomial decay of the \ft , which we will do here. The
first  error estimate is a simple consequence of our main result 
(Theorems~\ref{tm:WA} and 4.6)  and is the most relevant special case.  
\begin{theorem}\label{tm:intro0}
Let $a,b>1$. If $f$ and
$\hat{f}$ are continuous and 
satisfy polynomial decay of the form $\sup_{x\in\R}|f(x)|(1+|x|)^{a}<\infty$ and $\sup_{\xi\in\R}|\hat{f}(\xi)|(1+|\xi|)^{b}<\infty$, 
then, 
for all
$\alpha < a-\frac 1 2$ and $\beta <b-\frac 1 2$, there is a constant $c>0$ that is independent of $h$, $n$, $p$ such that 
\begin{equation}\label{eq:re1}
E^{[n]}_{h}(f) \leq c \,(  p^{-\alpha}+h^\beta)\,. 
\end{equation}
\end{theorem}

This statement can be read on several levels.
  \begin{enumerate}
  \item[(i)] It confirms  the users'  experience that the discrete \ft\
indeed approximates the continuous \ft .
\item[(ii)]   Theorem~\ref{tm:intro0} offers guarantees of how well the FFT
approximates the \ft\ by   quantifying  the approximation  error in terms of the decay of $f$ and
$\hat{f}$.  
\item[(iii)] The theorem provides a hint on how to
improve the accuracy of the numerical computations.  This is a new insight with possibly practical
consequences.\\
\indent If no a priori knowledge of $f$ is given, then the $n$ samples of $f$
are usually chosen on an interval of length $p = \sqrt{n}$ with
grid size $h=1/{\sqrt n}$. In this case the error decays like  $E^{[n]}_{h}(f) = \cO
(n^{-\min (\alpha , \beta )/2})$. However, with some knowledge about these
decay rates, one may balance the two terms in \eqref{eq:re1} so that
both terms contribute equally\footnote{We write
  $\lesssim$ if the left-hand-side is bounded by a constant times the
  right-hand-side. If $\lesssim$ and $\gtrsim$ both hold, then we
  write $\asymp$. The dependency of the constants on other parameters
  shall be clarified or is clear from the context.}, i.e.,
$p^{-\alpha}\asymp h^\beta\asymp n^{-\frac{\alpha
    \beta}{\alpha+\beta}}$.  For this choice  the error is
\begin{equation}\label{eq:re4}
E^{[n]}_{h}(f)\leq c  n^{-\frac{\alpha\beta}{\alpha +\beta}}  
\end{equation}
 This observation will become
relevant when  the decay rates $\alpha
, \beta$ are known and the difference   $|\alpha - \beta |$ is
large. The choice  $p^{-\alpha}\asymp h^\beta$ then leads to
a significantly more accurate numerical approximation of
  the  \ft\ using the same number of samples. Conversely, a given
  target  level of accuracy can be achieved with significantly fewer
  samples. See Section~\ref{sec:num}. \\
\indent We also note that this choice 
leads to  the asymptotically optimal relation between $h$, $p$, and
$n$, see Corollary \ref{cor:poly a and b}.
\item[(iv)]   In Corollary~\ref{cor:poly a and b  explicit} we will determine the constants involved in the balancing $p^{-\alpha}\asymp h^\beta\asymp n^{-\frac{\alpha
    \beta}{\alpha+\beta}}$, and we will determine the constant $c$ in \eqref{eq:re4} 
  explicitly in terms of $\alpha ,\beta $ and a norm on $f$. We
  emphasize that the ultimate version of the error estimate is a
  non-asymptotic result that may yield relevant information about the
  error even for small $n$.
  \end{enumerate}
Our numerical simulations in  Section \ref{sec:num}  confirm that the  bounds in
Theorem \ref{tm:intro0} are sharp up to some infinitesimal margin.

\paragraph*{Smoothness instead of Fourier decay}
One may argue that we also make certain assumptions on $\hat{f}$, which is
the very object we want to compute. The polynomial decay
condition of $\hat{f}$ in Theorem~\ref{tm:intro0} can be replaced by a stronger condition on smoothness of $f$.

Precisely, if the derivatives
$f^{(l)}$ are in $L^1(\R)$ for $l= 0, \dots ,  b  $ with $b\in\N$, then it follows that
$\sup_{\xi\in\R}|\hat{f}(\xi)|(1+|\xi|)^{b}<\infty$.   Using this
observation, we obtain a slightly weaker, but
equally  useful version of Theorem~\ref{tm:intro0}. 
\begin{corollary}
 Let $a,m>1$. Assume that  $|f(x)| \lesssim (1+|x|)^{-a}$ 
 and that $f$ has $m$ derivatives in $L^1(\R )$. Then for 
 $0< \alpha < a-1/2$ and $\beta < m-1/2 $  there is a constant $c>0$ 
 independent of $h$, $n$, $p$ such that 
\begin{equation} \label{eq:re3}
E^{[n]}_{h}(f) \leq c \,(  p^{-\alpha }+h^{\beta})\,. 
\end{equation}
\end{corollary}

\subsection{Sub-exponential decay} Exponential and sub-exponential decay in time and in frequency occur importantly
in the theory of test functions and distributions (under the name of
Gelfand-Shilov spaces or ultra test functions)~\cite{bjork66}. The
strongest form is exponential decay for which error estimates have
been treated before.   We offer the following version of the error.

 \begin{theorem}[sub-exponential decay]\label{tm:intro1}
Let $r,s>0$ and $0<\alpha,\beta\leq 1$. If $f$ and $\hat{f}$ are continuous 
and satisfy the decay conditions $\sup_{x\in\R}|f(x)|e^{r|x|^\alpha}<\infty$ and $\sup_{\xi\in\R}|\hat{f}(\xi)| e^{s|\xi|^\beta}<\infty$, then for all $r'<r$ and $s'<s$, 
there is a constant $c>0$ that is independent of $h$, $n$, $p$ such that 
\begin{equation*}
E^{[n]}_h(f) \leq c\,( e^{-r' (\frac{p}{2})^{\alpha}}+ e^{-s' (2h)^{-\beta}}) \,.
\end{equation*}
\end{theorem}
For $r'\to r$ and $s' \to s$ the error is balanced if  $e^{-r (\frac{p}{2})^{\alpha}}\asymp e^{-s (2h)^{-\beta}}$, which is satisfied for $h = n^{-\frac{\alpha}{\alpha+\beta}} \left(\tfrac{s}{r}\right)^{\frac{1}{\alpha+\beta}} 2^{\frac{\alpha-\beta}{\alpha+\beta}}$. This condition on $h$ is the optimal choice for the relation between $h$, $p,$ and $n$, see Corollary \ref{cor:subexp a and b}.

For the parameters $\alpha = \beta =1$, the assumption says that both
$f$ and $\hat{f}$ decay exponentially. The choice $p=\sqrt{n}$ and
$h=1/{\sqrt n}$ then yields the error estimate
$$
E^{[n]}_{h}(f)\leq c\, e^{-d \sqrt{n}} \, ,
$$
with a constant $d$ depending on $r,s,\alpha, \beta $. Up to possibly different constants, this  root-exponential convergence is compatible with
Stenger's result~\cite{StengerBook}.

\subsection{Mixed  conditions} We finally  provide bounds for mixed
conditions when $f$ possesses exponential decay, but finite smoothness.

\begin{theorem}[mixed decay]\label{thm:mixed intro}
  Let $r>0$ and $0<\alpha\leq 1$. If $f$ and $\hat{f}$ are continuous and satisfy $\sup_{x\in\R} |f(x)|e^{r|x|}<\infty$ and $\sup_{\xi\in\R} |\hat{f}(\xi)|(1+|\xi|)^b<\infty$ with $b>1$, then, for all $r'<r$ and $\beta<b-\frac{1}{2}$, there is a constant $c>0$ that is independent of $h$, $n$, $p$ such that 
\begin{equation}
E^{[n]}_{h}(f)\leq c\,( e^{-r' p/ 2 } + h^{\beta})\,.
\end{equation} 
\end{theorem}
For all practical purposes, the length of the sampled interval  $p$ is
sufficiently large, so that $e^{-r' p/ 2}$ is
negligible (and usually close to machine precision). Then  the
error  is dominated  by the polynomial term $h^\beta$.  

 The above estimate implies a  pointwise error of
  $h^\beta$ for $\beta < b-1/2$. The error estimate \eqref{eq:bound from briggs} from
\cite{Briggs} induces a bound for the $\ell^2$-error
$E^{[n]}_{h}(f)$ on the order of $h^\beta$ for
$\beta=b-\frac{1}{2}$, however, under  additional, stringent boundary conditions.

For comparison,   we formulate   the error estimate for the
  known  case of  functions with compact support.
\begin{corollary} \label{corintro2}
  Assume that $f$ has compact support in $[-\frac{p}{2},\frac{p}{2}]$ with $p=hn$ and is $m$-times
  differentiable, then for every $\epsilon >0$ there is a
  constant $c=c(\epsilon,p)$, such that
  $$
  E^{[n]}_{h}(f)\leq c\, n^{-(m-1/2-\epsilon)}\,  . 
$$
\end{corollary}
We note that the pointwise estimate $n^{-m}$ from \cite{Epstein} translates
into the estimate $ E^{[n]}_{h}(f) \lesssim n^{-(m-1/2)}$. Thus except
for the additional $\epsilon $ in the exponent,  we recover the well-known results for
compactly supported functions. The presence of $\epsilon$ is
plausible, because the assumption of differentiability is slightly
stronger than the polynomial decay of $\hat{f}$, and we derive 
Corollary~\ref{corintro2} from that weaker assumption (and thus
obtain a weaker conclusion).

\subsection{Interpolation}\label{sec:intro inter}
Software  
to plot the discrete Fourier transform
commonly performs interpolation of the computed data vector $\DFT\!
f_{h,n}$. The implicit
question is then what the samples of $\hat{f}(k/p)$ or their
approximations say about the global Fourier transform $\hat{f}$. In order to simulate this process, we use three
representative interpolation procedures  and derive bounds on their
deviation from $\hat{f}$ on the real line. One of them is based on the
cardinal sine function  $\sinc(x) = \frac{\sin(\pi x)}{\pi x}$ that is
widely used in time-frequency analysis, digital signal
  processing, and numerical analysis
  \cite{butzer-2,TascheBook,Stenger81,StengerBook}.   The computation of $\DFT\!
f_{h,n}$ gives rise to the approximation of $\hat{f}$ on the real line
by  
\[
\hat{f}(\xi) \approx 
\sqrt{p} \sum_{k\in[n]} (\mathcal{F}f_{h,n})(k) \sinc(p\xi-k)
=h \sum_{k,j\in[n]} f(hj) e^{-2\pi\mathrm{i}\frac{kj}{n}} \sinc(p\xi-k)\,.
\]
We will  consider 
  the $L^2$ norm, this is, we estimate
$$\|\hat{f} -h \sum_{k,j\in[n]} f(hj)
e^{-2\pi\mathrm{i}\frac{kj}{n}} \sinc(p\cdot-k) \|_{L^2(\R)}\,, $$
and will
show in Section~\ref{sec:bandlimited}, Theorem \ref{tm:finale again11}, that  this error
obeys exactly  the same bounds as $E_h^{[n]}(f)$ in Theorems \ref{tm:intro0},
\ref{tm:intro1}, and \ref{thm:mixed intro}.

\subsection{Minimal requirements for convergence}\label{sec:int:minimal}
Whereas in most applications and in scientific computing it is
important to understand how well the FFT approximates the Fourier
transform and how fast the convergence happens, it is a fundamental
question whether and when the approximations converge at
all. Mathematically, we search for minimal conditions on a function. 
To the best of our knowledge, this problem  has not  been addressed
in the literature. \\ 
\indent 
For the initial formulation, we use the pointwise error 
and weak polynomial decay conditions. 
\begin{proposition} \label{prop:c6}
  If $f$ and $\hat{f}$ satisfy the mild decay conditions
  $\sup_{x\in\R}|f(x)|(1+|x|)^{1+\epsilon}<\infty$ and
  $\sup_{\xi\in\R}|\hat{f}(\xi)|(1+|\xi|)^{1+\epsilon}<\infty$ for
  some $\epsilon >0$, then 
  \begin{equation}
\lim _{p\to \infty } \lim _{h\to0}
\sup_{k\in[n]}\Big|\hat{f}(\tfrac{k}{p}) - h\sum_{j\in[n]}
f(hj)e^{-2\pi\mathrm{i} \frac{k j}{n}} \Big| = 0 \,.
  \end{equation}
\end{proposition}
Clearly, these decay conditions are the weakest possible. If $\epsilon
= 0$, then $f$ need not be integrable and its Fourier transform may
not even be defined pointwise. 

In Section~\ref{sec:minimal} we will formulate a more general, but
technical condition that is related to the validity of the
Poisson summation formula,  and we will also treat the convergence of
the interpolation. 
   \newlength{\Breite}
 \setlength{\Breite}{.75\textwidth}

 \subsection{Other error measures and noisy samples}
We will be exclusively interested in the $\ell ^2$-approximation error
$E^{[n]}_h(f)$, that is the error made by replacing the Fourier
transform by the FFT. Other error norms or the
influence of other effects, such as  rounding or distortions,  can be derived  from
the main estimates.

(i) \emph{Pointwise  error estimates}  follow easily because 
\begin{equation}
  \label{eq:c9}
\max _{k\in [n]}  \Big| \hat{f}\Big( \tfrac{k}{p} \Big) - h \sum_{j \in [n]} f(hj)
  e^{-2\pi i \frac{kj}{n}} \Big| \leq \sqrt{p} E^{[n]}_h(f)  \, ,
\end{equation}
and other $\ell^q$-norms 
can be handled similarly. As an example,
under the decay assumptions of Theorem~\ref{tm:intro0},
\eqref{eq:re4} implies the pointwise error estimate 
\begin{align*}
\max _{\in [n]}  \Big| \hat{f}(\tfrac{k}{p}) - h\sum_{j\in[n]} f(hj)
  e^{-2\pi\mathrm{i}\frac{kj}{n}}\Big| 
  \lesssim n^{-\frac{\beta}{\alpha +\beta } (\alpha - \frac{1}{2})} \, ,
\end{align*}
 where the last inequality follows for the choice $p^{-\alpha } \asymp n^{-\frac{\alpha
    \beta}{\alpha+\beta}}$.
We do not claim
that the  bounds for  the pointwise error are optimal. Our  tools are
designed for $E^{[n]}_h(f)$ and  rely heavily on its time-frequency
symmetry, which is not shared by the pointwise error.

(ii) \emph{Noise, contaminated function evaluation, or round-off errors} are
important sources of errors in scientific computing. In each case, instead of the
correct samples $f(jh)$ the samples of a distorted function
$\widetilde{f}$ or noisy samples $\widetilde{f}(hj)= f(hj)+\epsilon _j$ are used as input of
the FFT. However, since the approximation \eqref{eq:c1} is linear and
the FFT is unitary, the effect of these errors is benign and remains
what it was. Formally, using $\widetilde{f}$ and its samples
$\widetilde{f}_{h,n}$ instead of $f_{h,n}$  as input of the FFT,  
the resulting error is 
\begin{equation}\label{eq:distortion}
 \big\| \hat{f}_{\frac{1}{p},n} - \DFT\! \widetilde{f}_{h,n}\big\|
 \leq  \big\| \hat{f}_{\frac{1}{p},n} - \DFT\! f_{h,n}\big\| +  \big\|\DFT\! f_{h,n}
 - \DFT\! \widetilde{f}_{h,n}\big\|
 = 
 E^{[n]}_h(f)+\big\| (\widetilde{f}-f)_{h,n}\big\|\,.
\end{equation}
Thus the error splits into the \emph{approximation error} $E^{[n]}_h(f)$, which we
are concerned with, and the norm of the initial noise.  
Consequently, all error bounds  remain valid for the distorted
function values  after adding the noise  term $\big\| (\widetilde{f}-f)_{h,n}\big\|$ to the original bound.

\subsection{Methods}
As in~\cite{Briggs} we will use the Poisson summation formula 
  to decompose  the approximation error into a time and a frequency
  component. This allows us to estimate each component separately.
 In order to make this approach precise, we need to sample a function
 on a grid and to apply  a pointwise version of Poisson’s summation formula. 
The novelty of our approach is (i) the systematic exploitation of the
time-frequency symmetry of the problem, and (ii) the use of so-called
Wiener amalgam spaces to describe sampling and decay properties.
This class  is the canonical function class when sampling is
involved \cite{Feichtinger00,Feichtinger2,Fournier} and is widely used
in Fourier analysis. We believe that it is also very useful and
convenient in numerical analysis. 

\section*{Outline}
 Section \ref{sec:num} is dedicated to numerical simulations
 to illustrate that the theoretical results of
   Theorems~\ref{tm:intro0} and \ref{thm:mixed intro} are best possible. 
The full versions of the theoretical results on the approximation rates
for spaces of polynomial and sub-exponential decay in time and in
frequency are derived in Section \ref{sec:WA}.
The approximation of $\hat{f}$ on the real line via interpolation  is discussed in Section
\ref{sec:bandlimited}.
In Section \ref{sec:minimal}  we investigate under which minimal conditions the error
  tends to zero.

\section*{Acknowledgements}
The authors thank Norbert Kaiblinger for valuable suggestions that helped improving the manuscript. 
We would like to express our gratitude to  the referees for their meticulous reading and
detailed comments on the manuscript. Their suggestions have helped us
immensely  to improve the manuscript. 
A.\ K.\ is supported by the FWF project DISCO (PAT4780023). 

\bigskip\bigskip

\section{Numerical simulations}\label{sec:num}
We back up the theoretical results with  numerical evidence that the
polynomial error bounds in Theorem \ref{tm:intro0} are sharp,  up to some infinitesimal margin. 

It is important to emphasize that the following numerical examples are
not intended to represent typical functions for a specific
application. The sole purpose of this section is to illustrate that
the assumptions and error rates associated with polynomial decay
cannot, in general, be improved. Strictly speaking, our theoretical
results allow for arbitrary $\alpha < a - \frac{1}{2}$, while the
numerics suggest that the rate $\alpha = a - \frac{1}{2}$ still
holds. This is what we mean by `sharp up to some infinitesimal
margin'. 

This numerical verification of optimality  is nontrivial: it requires
functions $f$ for which both $f$ and its Fourier transform $\hat{f}$
can be evaluated with high accuracy, and for which the decay rates in
both time and frequency are precisely known. For functions with a
standard closed-form expression, at least one of these properties
typically fails. We therefore construct suitable functions through a
more refined approach below.

\subsection{Polynomial decay in time and in frequency}
In view of Theorem \ref{tm:intro0}, we construct
a family $f^{a,b}$
of functions with exact polynomial decay $a$ in time and $b$ in frequency
such that 
each function and its Fourier transform can be numerically evaluated with sufficient accuracy. 

We use infinite linear combinations of shifts of the cardinal  B-splines   
defined by  $B_1:={\bf 1}_{[-\frac{1}{2},\frac{1}{2})}$, and 
\begin{equation*}
B_{b+1}:=B_{b}*B_1,\qquad b=1,2,\ldots\,.
\end{equation*}
 Each $B_b$ is a piecewise polynomial function of degree $b-1$ that is
 $b-2$-times continuously differentiable with  support 
 $[-\frac{b}{2},\frac{b}{2}]$, and its  Fourier transform is 
 $\widehat{B_b}(\xi ) = \big( \tfrac{\sin \pi \xi }{\pi \xi } \big)^b
 = \sinc ^b (\xi )$, see \cite{Schoenberg} and also
 \cite[Section 9.1]{TascheBook}.  

Consider $(c_k)_{k\in\Z}\subseteq\C$ given by its nonzero entries
\begin{equation*}
c_0=\frac{1}{2},\qquad c_k = \frac{1}{k\pi \mathrm{i}}, \text{ for }
k\in 2\Z+1\, , 
\end{equation*}
so that $c_{2k}=0$.
For integer parameters $a,b \in \N$  we study the family of  functions
\begin{equation}\label{eq:fab}
f^{a,b}(x) := \sum_{k\in \Z} c^a_k \,B_b(x-k),\qquad a,b=1,2,\ldots \, .
\end{equation}
Every $f^{a,b}$ is a  locally finite sum,  therefore its  point
evaluations can be computed accurately in numerical experiments, and they satisfy $\sup_{x\in\R}|f^{a,b}(x)| (1+|x|)^{a}<\infty$.  

To compute the Fourier transform of $f^{a,b}$, we define the Fourier
series $u_a(\xi):=\sum_{k\in \Z} c^a_ke^{-2\pi\mathrm{i}k\xi}$ and
obtain 
\begin{align*}
\widehat{f^{a,b}}(\xi) 
& = \sinc(\xi)^b \,  \sum_{k\in \Z} c^a_ke^{-2\pi\mathrm{i}k\xi}=\sinc(\xi)^b u_a(\xi)\,.
\end{align*}
The estimate $|\sinc^b(\xi)| \lesssim (1+|\xi|)^{-b}$ implies
$\sup_{\xi\in\R}|\widehat{f^{a,b}}(\xi)|(1+|\xi|)^{b}<\infty$, thus $f^{a,b}$
satisfies the  assumptions of Theorem~\ref{tm:intro0}.

By a short computation we find $u_1(\xi) = {\bf
  1}_{(-\frac{1}{2},0)}(\xi)+\frac{1}{2}{\bf
  1}_{\{-\frac{1}{2},0\}}(\xi)$  for $\xi\in
[-\frac{1}{2},\frac{1}{2})$, and then  $u_{a+1}$ is the cyclic convolution of $u_a$ and $u_1$, i.e., 
\begin{equation*}
u_{a+1}(\xi)=\int_{\R/\Z} u_a(\eta)u_1(\xi-\eta)\mathrm{d}\eta , \qquad a=1,2,\ldots\,.
\end{equation*}
For $a=2,3,4,5$, tedious calculations lead to periodic functions with
period $1$ whose values on $[-1/2,1/2)$ are given by 
\begin{align*}
u_2(\xi)  &= |\xi|,\\
u_3(\xi) &= -\sign(\xi) \xi^2+\frac{1}{2}\xi+\frac{1}{8},\\
u_4(\xi) &= 2|\xi|^3/3 - \xi^2/2  + 1/12,\\
u_5(\xi) &= -\sign(\xi)\xi^4/3 + \xi^3/3 - \xi/24+1/32 \, . 
\end{align*}
Thus, we can compute the point evaluations of $f^{a,b}$ and
$\widehat{f^{a,b}}$ exactly, both analytically and numerically, and
the  decay parameters  are precisely   $a$ and $b$. 
For the numerical simulations we compute the samples
$\widehat{f^{a,b}}(k/p)$ and compare them with the discrete Fourier
transform of a sampled version of $f^{a,b}$. 

To compute the discrete Fourier transform, we choose  $n=2^l$,
$l=10,\ldots, 20$ and then  apply the FFT to the samples $f^{a,b}_{h,n}$. The FFT is taken from the Julia package \texttt{FFTW.jl}  
that provides a binding to the \texttt{C} library FFTW \cite{FFTW}.

Since the error estimates of Theorem \ref{tm:intro0} work  for every
parameter $\alpha < a-1/2$ and $\beta < b-1/2$, we choose $\alpha=a-\frac{1}{2}$ and
$\beta=b-\frac{1}{2}$. We compare the results of the  numerical
experiments  with the
error rates $n^{-\frac{\alpha\beta}{\alpha+\beta}}$ predicted by Theorem \ref{tm:intro0} for
$h=n^{-\frac{\alpha}{\alpha+\beta}}$ that are listed in Table \ref{table:unique}. 
\begin{table}[h]
\centering
\begin{tabular}{ c c | c| c |c| c|c}
 $a$ & $b$  &$\alpha=a-\frac{1}{2}$ & $\beta=b-\frac{1}{2}$ &
                                                              $h=n^{-\frac{\alpha}{\alpha+\beta}}$&
                                                                                                    $p=nh=n^{\frac{\beta}{\alpha+\beta}}$ &  $E^{[n]}_h(f^{a,b}) \asymp h^\beta=n^{-\frac{\alpha\beta}{\alpha+\beta}}$\\ 
 \hline 
$2$ & $2$ & $3/2$ & $3/2$ &$n^{-1/2}$  & $n^{1/2}$ & $n^{-3/4}$ \\  
$2$ & $3$ & $3/2$ & $5/2$& $n^{-3/8}$ & $n^{5/8}$ & $n^{-15/16}$ \\  
$2$ & $4$ & $3/2$ & $7/2$& $n^{-3/10}$ & $n^{7/10}$ & $n^{-21/20}$ \\
$3$ & $2$ & $5/2$ & $3/2$& $n^{-5/8}$ & $n^{3/8}$ & $n^{-15/16}$ \\  
$3$ & $3$ & $5/2$ & $5/2$& $n^{-1/2}$ & $n^{1/2}$ & $n^{-5/4}$ \\  
$3$ & $4$ & $5/2$ & $7/2$& $n^{-5/12}$ & $n^{7/12}$ & $n^{-35/24}$\\
\end{tabular}
\caption{List of the relevant values of Theorem \ref{tm:intro0} for $f^{a,b}$, so that $E^{[n]}_h(f^{a,b})$ is bounded by $h^\beta=p^{-\alpha}=n^{-\frac{\alpha\beta}{\alpha+\beta}}$.}\label{table:unique}
\end{table}

Our numerical experiments as illustrated in Figure \ref{fig:3} align
perfectly with the theoretical bounds in Theorem \ref{tm:intro0} as presented in Table \ref{table:unique} and suggest that they are sharp up to some infinitesimal margin.

\begin{figure}
\centering
\includegraphics[width=\Breite]{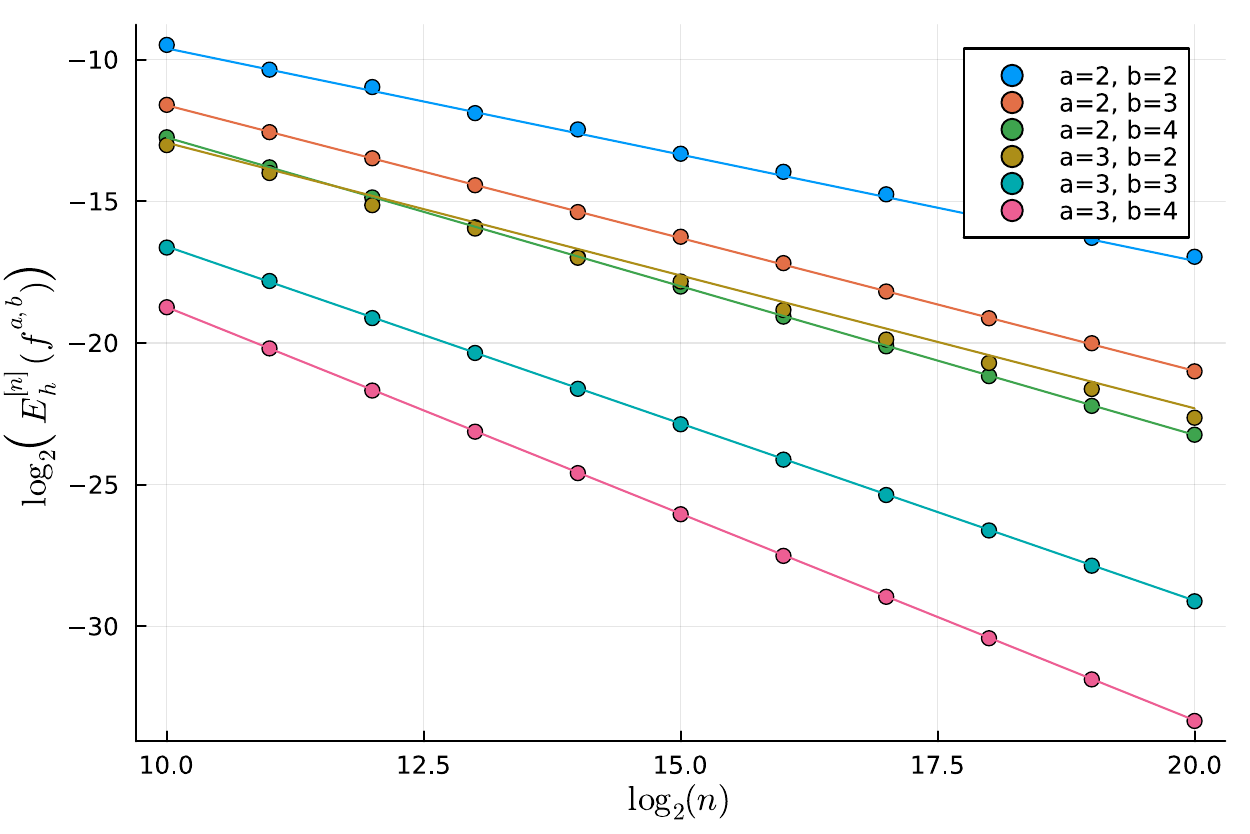}
\caption{Logarithmic plots of $E^{[n]}_{h}(f^{a,b})$ for various $a$
  and $b$. Points are computed numerically for $n=2^l$, $l=10,\ldots,
  20$. Reference lines represent the  theoretical slopes predicted by
  Theorem \ref{tm:intro0} and listed in Table \ref{table:unique}.
}\label{fig:3}
\end{figure}

\begin{remark}
Let us revisit the construction of $f^{a,b}$ to highlight why it is so
remarkable that we are able to perform numerical experiments. The
B-splines $B_b$ and their Fourier transforms $\sinc^b$ can be
evaluated accurately. Because B-splines have compact support, the
infinite series defining $f^{a,b}$ reduces to a finite sum when
evaluated at a finite number of points. Moreover, the careful choice
of coefficients in this expansion ensures that the Fourier transform
of $f^{a,b}$ admits a closed-form expression and  enables the
accurate numerical evaluation. This is a rare situation, since  most
functions do not satisfy these properties, and meaningful numerical
experiments depend critically on them. 

In particular, the closed-form expressions for $
u_a(\xi) = \sum_{k \in \mathbb{Z}} c^a_k e^{-2\pi i k \xi}
$ 
are essential to our numerical results. Without them, the series in
\eqref{eq:fab} would need to be truncated, introducing truncation
errors. Summing many terms also risks floating-point inaccuracies,
reducing the reliability of computed values of
$E^{[n]}_h(f^{a,b})$. The availability of a closed-form expression for
$u_a$ eliminates these sources of numerical error. 
\end{remark}

\subsection*{Choice of $p$ and $h$}
To highlight the importance of a good choice of  $p$ and $h$, we consider the function $f^{2,4}$, $f^{2,5}$, $f^{3,5}$, $f^{5,2}$, whose decay in time differs from the decay in frequency. We 
compare two parameter selections: the optimal choice  suggested by
Theorem \ref{tm:intro0}, versus the standard choice $p = \sqrt{n}$ and
$h = 1/\sqrt{n}$~\cite{boyd,Kaiblinger05}. As shown in Figures
\ref{fig:00}, \ref{fig:001}, \ref{fig:002}, and \ref{fig:003}, the
optimal parameters yield an error decay rate consistent with our
theoretical predictions and clearly outperform the standard choice.

From the plots one can draw valuable information about the number of
samples required to achieve a target accuracy. For instance, by drawing a horizontal
line in Figure~\ref{fig:00} at the error $E_h^{[n]}(f^{2,4})=
2^{-15}\approx 3\cdot 10^{-5}$ we find that $n=2^{12} $ samples are
required for the optimal choice of $h,n,p$, whereas $2^{17}$ samples
are required for the choice $p=h^{-1}=\sqrt{n}$. Even for an accuracy
of only five digits, this is an enormous saving!

\begin{figure}
\centering
\includegraphics[width=\Breite]{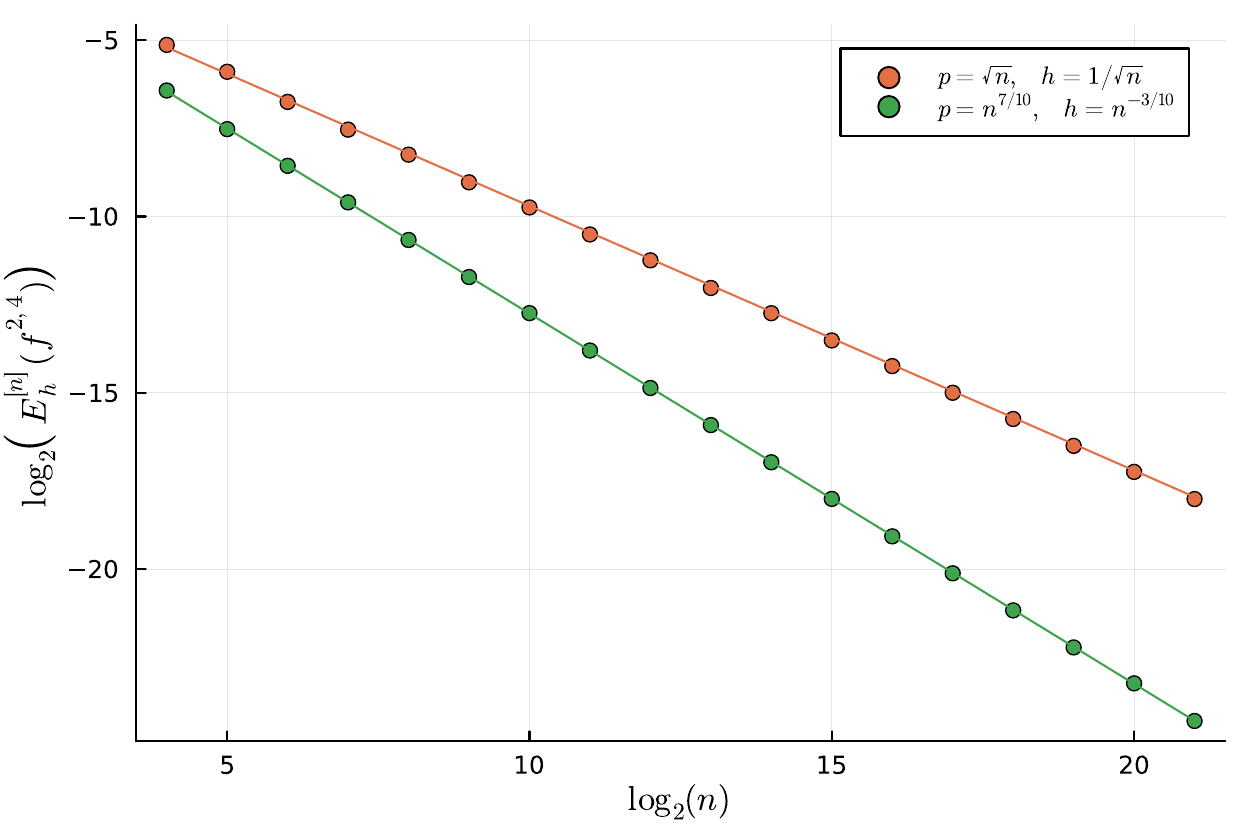}
\caption{For $f^{2,4}$, we compare the error curve  for
    the  standard choice $p = \sqrt{n}$, $h = 1/\sqrt{n}$ with the
    error curve for the improved choice $p = n^{7/10}$, $h =
    n^{-3/10}$. The reference lines indicate the theoretical error
    decay rates: $n^{-3/4}$ for the standard choice and $n^{-21/20}$
    for the optimal  one.
}\label{fig:00}
\end{figure}

\begin{figure}
\centering
\includegraphics[width=\Breite]{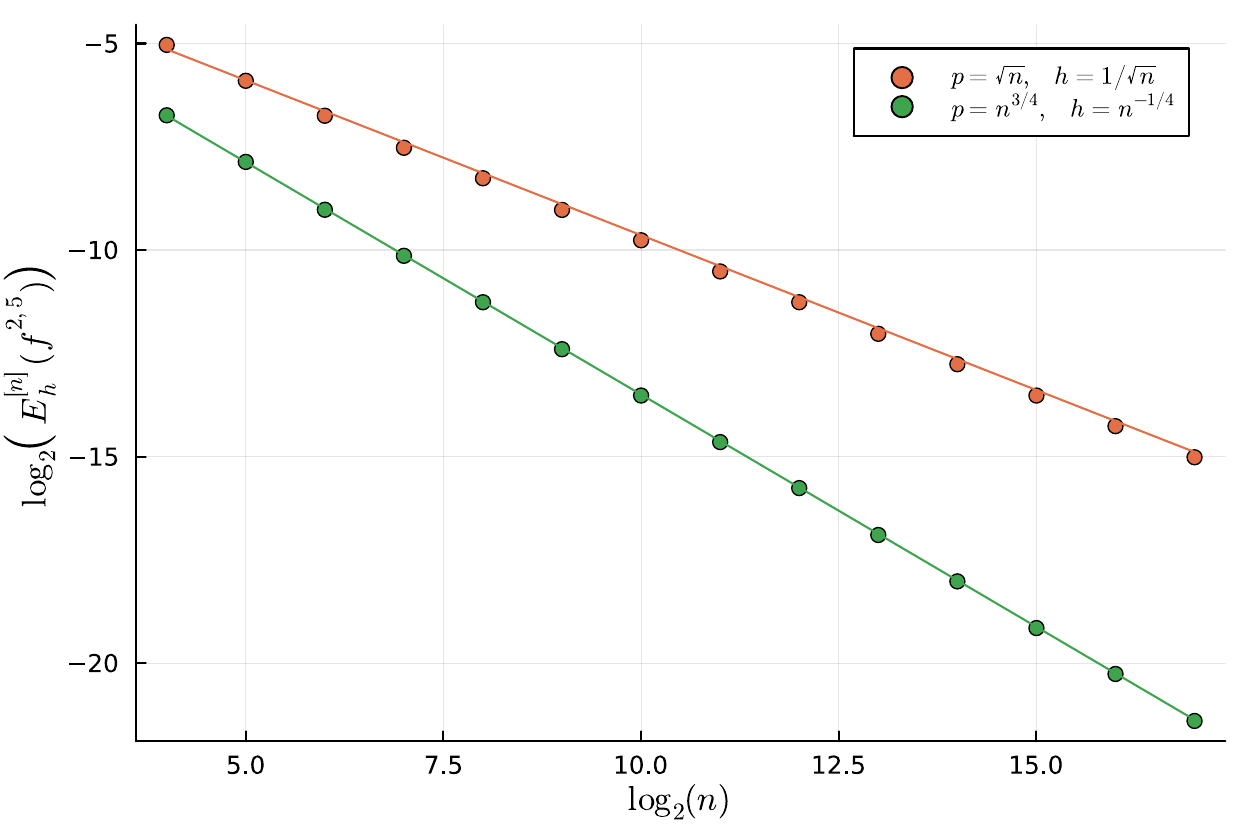}
\caption{Comparison of error curves for $f^{2,5}$. 
}\label{fig:001}
\end{figure}

\begin{figure}
\centering
\includegraphics[width=\Breite]{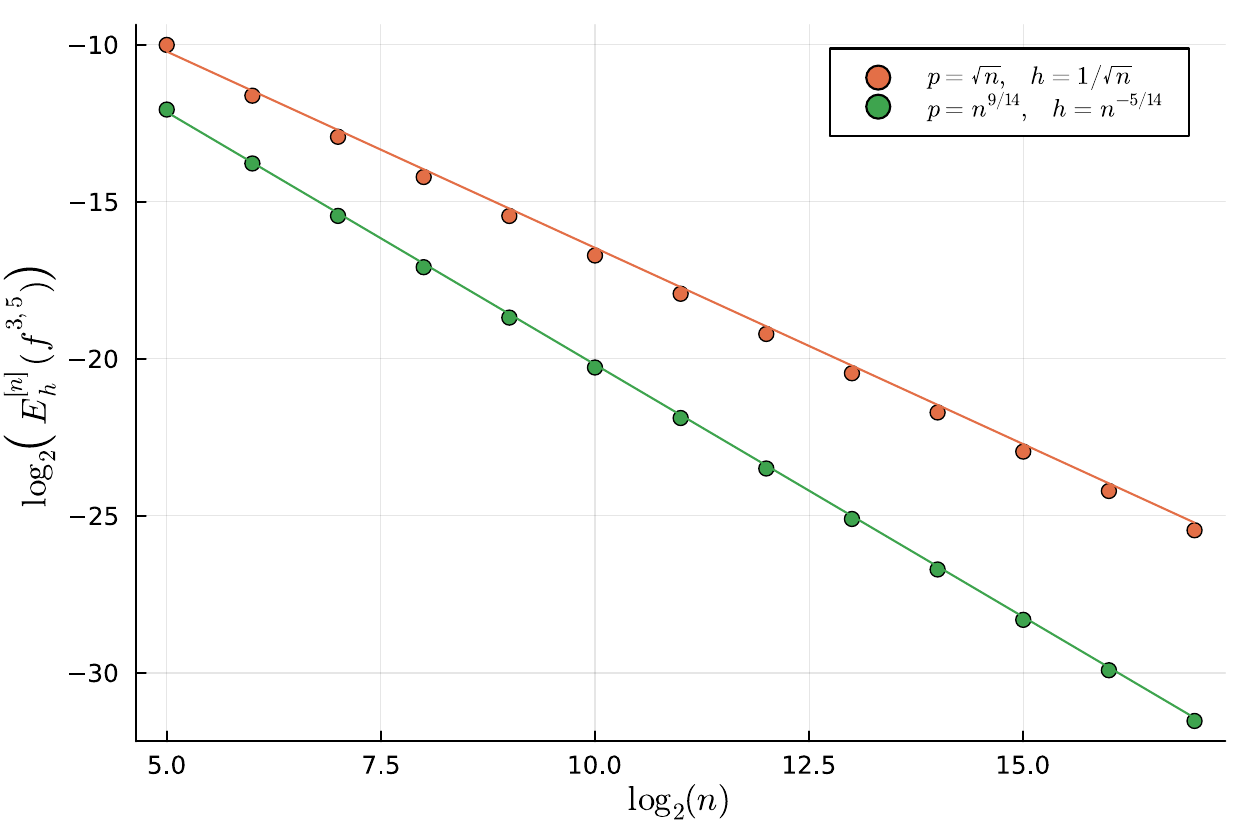}
\caption{Comparison of error curves for $f^{3,5}$.
}\label{fig:002}
\end{figure}

\begin{figure}
\centering
\includegraphics[width=\Breite]{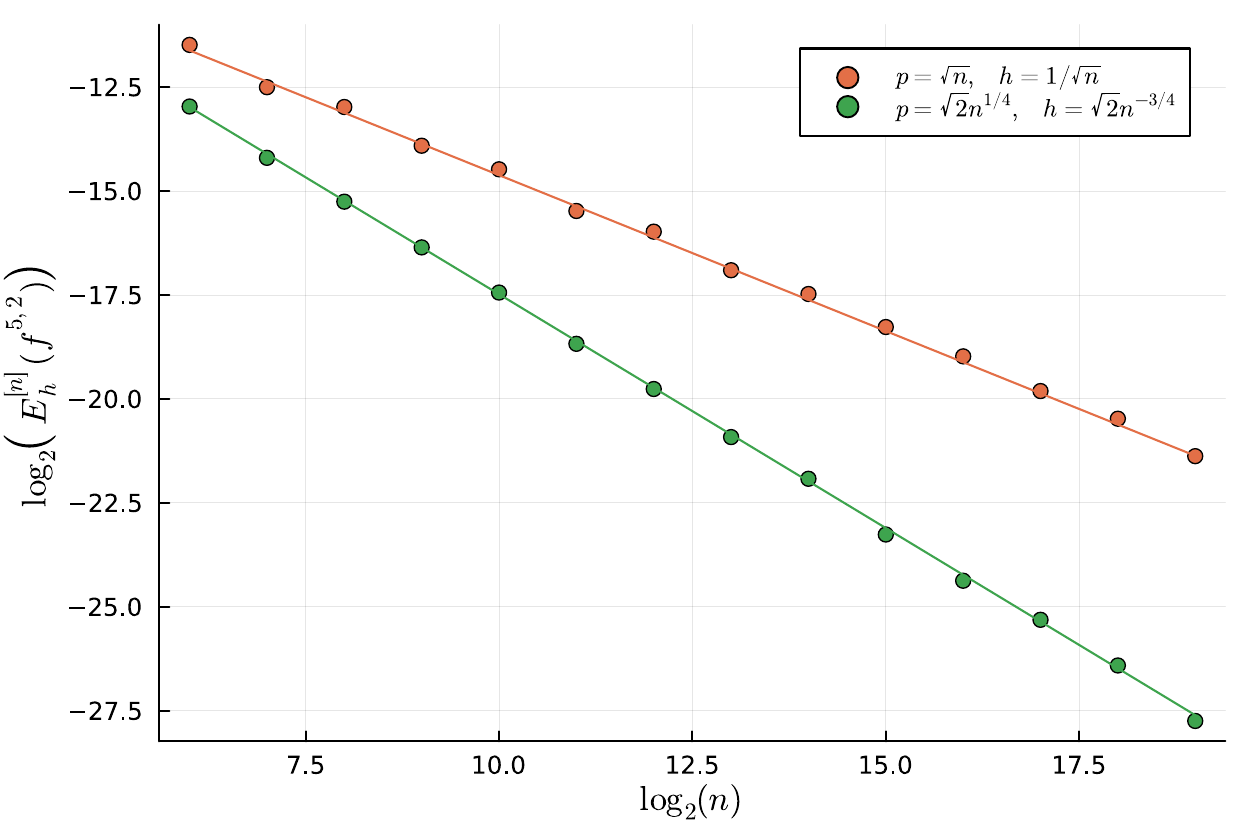}
\caption{Comparison of error curves for $f^{5,2}$.
}\label{fig:003}
\end{figure}

\subsection{Exponential decay in time and polynomial decay in frequency}\label{sec:exp in time and pol in freq}
To demonstrate that Theorem \ref{thm:mixed intro} is best possible, we
consider the exponential  functions
$$
f_\delta(x) = e^{-2\pi|x - \delta|}, 
$$
for a shift parameter  $\delta \in \mathbb{R}$. Its Fourier transform is
$$
\hat{f_\delta}(\xi) = \frac{e^{-2\pi i \delta \xi}}{\pi(1 + \xi^2)}.
$$
Thus, $f_\delta$ decays exponentially in space and polynomially in frequency. This corresponds to the parameters in Theorem \ref{thm:mixed intro} being $r = 2\pi$, $\alpha = 1$, and $\sup_{\xi \in \mathbb{R}} |\hat{f_\delta}(\xi)|(1 + |\xi|)^b < \infty$ with $b = 2$.

When $p$ is sufficiently large, the error in Theorem \ref{thm:mixed intro} is dominated by the polynomial decay over a wide range of $n$. For numerical experiments using the FFT with $\delta = 0$, we consider the choices
$$
p = n^{1/2}, \quad p = 6n^{1/10}, \quad \text{and} \quad p = 10\,,
$$
corresponding to $h = n^{-1/2}$, $h = 6n^{-9/10}$, and $h = 10n^{-1}$,
respectively. The expected convergence rates are $h^{3/2}$, which
translate into  
$$
n^{-3/4},\quad n^{-27/20},\quad \text{and} \quad n^{-3/2}\,.
$$
Figure \ref{fig:40} confirms these predictions with excellent
agreement between theory and experiment for \emph{all} shift
parameters $\delta $ (the plots look identical for $0\leq \delta\leq 1$).

\begin{figure}
\centering
\includegraphics[width=\Breite]{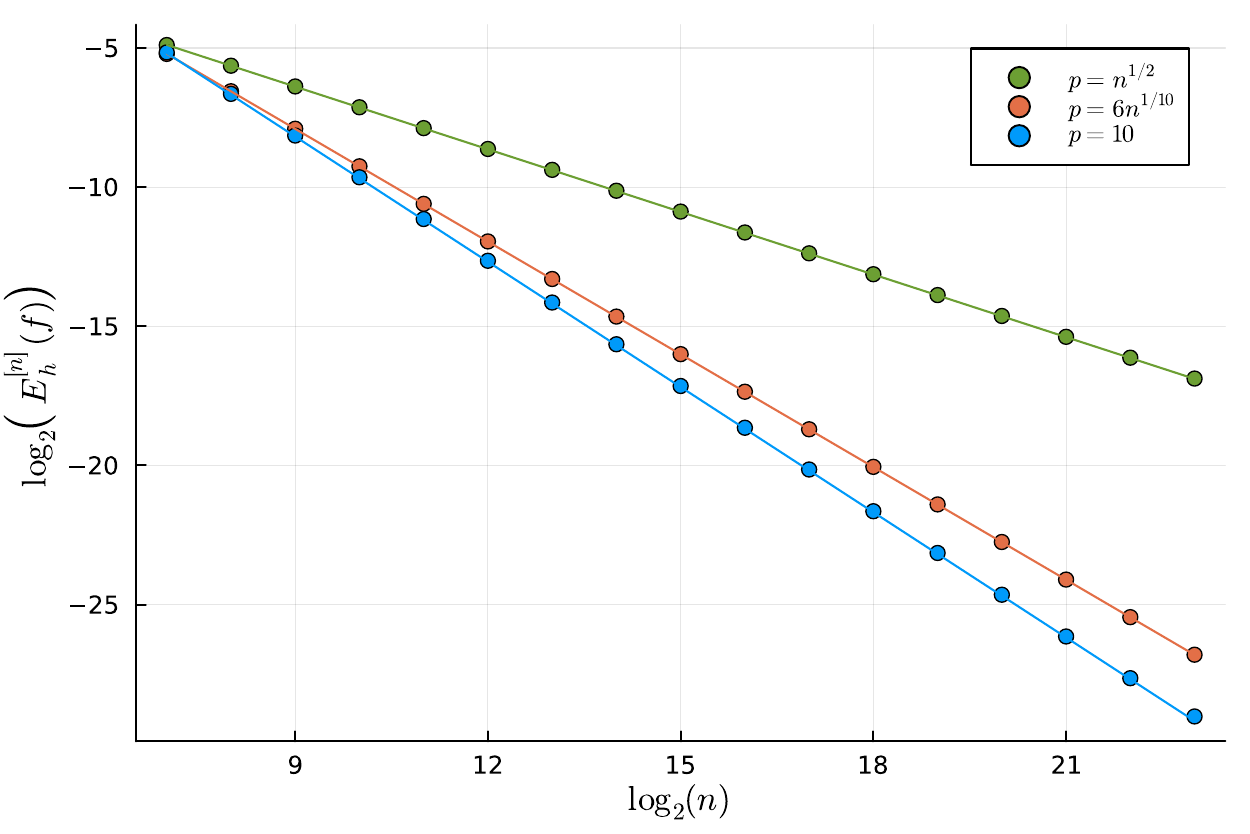}
\caption{Logarithmic plots of $E^{[n]}_{h}(f)$ for $f(x)=e^{-2\pi|x|}$. Points are computed numerically for $n=2^l$, $l=7,\ldots, 23$. We choose $p=n^{\frac{1}{2}}$, $p=6n^{1/10}$, and $p=10$. Reference lines represent the theoretical slope $h^\beta$ with $\beta=\frac{3}{2}$ that lead to $n^{-\frac{3}{4}}$, $n^{-\frac{27}{20}}$, and $n^{-\frac{3}{2}}$, respectively. Points align well with theoretical predictions.}\label{fig:40}
\end{figure}

This example is also considered in a case study in
\cite{Briggs}. For $\delta = 0$ the pointwise error is calculated to
be of the order $n^{-2}$. However, as soon as  $\delta \neq 0$  and $p
= 10$, say, the required condition $f_\delta(p/2) = f_\delta(-p/2)$ in
\cite{Briggs} is violated, and their bounds yield only $n^{-1}$ for
the maximum norm, and hence only $n^{-1/2}$ for the $2$-norm as used
in $E^{[n]}_h(f)$, whereas both Theorem~\ref{thm:mixed intro} and our numerical
experiments consistently  yield the error rate  $n^{-3/2}$. 
This further supports the sharpness of our theoretical bounds.

\section{Approximation rates}\label{sec:WA}
In this section we  prove Theorems \ref{tm:intro0}, \ref{tm:intro1},
and \ref{thm:mixed intro} from the introduction. All these statements
follow from a single theorem by feeding it with various decay
conditions. Specifically, we derive   error bounds for the averaged
$\ell ^2$-error  $E^{[n]}_h(f)=\| \hat{f}_{\frac{1}{p},n} - \DFT\!
f_{h,n}\|$. This error exhibits a  striking symmetry between time and
frequency, which will  be reflected in the symmetry of the error
estimates. 
\begin{lemma}\label{lemma:symmetry}
If  $p=nh$ and $f,\hat{f}\in L^1(\R)$, then 
\begin{equation}\label{eq:E sym}
E^{[n]}_h(f) = E^{[n]}_{p^{-1}}(\bar{\hat{f}})\,.
\end{equation}
\end{lemma}
\begin{proof}
  To verify \eqref{eq:E sym}, we recall that
  $(\bar{\hat{f}})\,\widehat{}=  \bar{f}$ and that $\mathcal{F}$ is unitary, so that
\begin{align*}
E^{[n]}_{p^{-1}}(\bar{\hat{f}}) & 
                                  = \|\bar{f}_{h,n}-\mathcal{F}\bar{\hat{f}}_{\frac{1}{p},n} \| =  \|\mathcal{F}^*\bar{f}_{h,n}-\bar{\hat{f}}_{\frac{1}{p},n} \| \,.
                                  \end{align*}
The discrete Fourier transform also satisfies $\mathcal{F}^*\bar{y}=\overline{\mathcal{F}
  y}$, where complex conjugation is meant entry-wise. Therefore, we derive $E^{[n]}_{p^{-1}}(\bar{\hat{f}}) = \|\overline{\mathcal{F}f_{h,n}}-\bar{\hat{f}}_{\frac{1}{p},n} \|$. Complex conjugation does not affect the norm, so that the latter coincides with $\|\mathcal{F}f_{h,n}-\hat{f}_{\frac{1}{p},n} \|=E^{[n]}_h(f)$.
\end{proof}
Thus, we expect good bounds for $E^{[n]}_h(f)$ to reflect the same symmetry between $(f,h)$ and $(\hat{f},p^{-1})$.

\subsection{Error decomposition into time and frequency components}\label{sec:decomp}
We derive general and explicit bounds on the approximation error $E^{[n]}_h(f)=\| \hat{f}_{\frac{1}{p},n} - \DFT\! f_{h,n}\|$ in \eqref{eq:error original unique} by decomposing the error into a time and a frequency component that are estimated separately. These components arise from
approximating  $\hat{f}(\tfrac{k}{p})$ first by a Riemann sum and then truncating it  as in \eqref{eq:1approx}, 
\begin{equation}\label{eq:quad and trunc}
\hat{f}\big(\tfrac{k}{p}\big) \approx h\sum_{j\in\Z} f(hj)
e^{-2\pi\mathrm{i}\frac{k}{p}hj } \approx h\sum_{j\in[n]} f(hj)
e^{-2\pi\mathrm{i} \frac{kj}{n} } = \sqrt{hn} (\DFT\! f_{h,n})(k) \,.
\end{equation}
The left-hand side is the sampled (continuous) Fourier transform, the
right-hand side is its approximation by the discrete Fourier
transform. The idea is to estimate the discretization error 
\begin{equation*}
\hat{f}\big(\tfrac{k}{p}\big) - h\sum_{j\in\Z} f(hj)
e^{-2\pi\mathrm{i}\frac{k}{p}hj }
\end{equation*}
by the  decay  of $\hat{f}$, and the truncation error 
\begin{equation*}
h\sum_{j\in\Z \setminus [n]} f(hj)
e^{-2\pi\mathrm{i}\frac{k}{p}hj }
\end{equation*}
by the decay of $f$.

\subsection{Wiener amalgam spaces} \label{sec:wamalg}
The treatment of the error makes use of sampling  a function on a
grid and a strong pointwise version of
the Poisson summation formula. To handle these, we introduce  the
 Wiener amalgam spaces that are  tailored to these objectives. 

The continuous functions on the real
  line are denoted by $\mathcal{C}=\mathcal{C}(\R)$.  
 The space $W(\mathcal{C},\ell^1)$ consists of all continuous functions $f$ such that 
\begin{equation}\label{eq:def WA0}
\|f\|_{W(\mathcal{C},\ell^1)} := \sum_{l\in\Z} \sup_{x\in[0,1]} \left| f(x+l)\right|
\end{equation}
is finite. Wiener amalgam spaces were introduced by N.\
Wiener~\cite{wiener} and have become convenient and important function
spaces in Fourier analysis,
cf.~\cite{Feichtinger00,Feichtinger2,Fournier}. The norm in
  \eqref{eq:def WA0} is easy to understand. A function $f$  belongs to
  $W(\mathcal{C}, \ell ^1)$, if it is continuous and is majorized by an
  integrable   step function with jumps at the integers. 
  See  Figure~\ref{fig:wam}.
\begin{figure}
\centering
\includegraphics[width=\Breite]{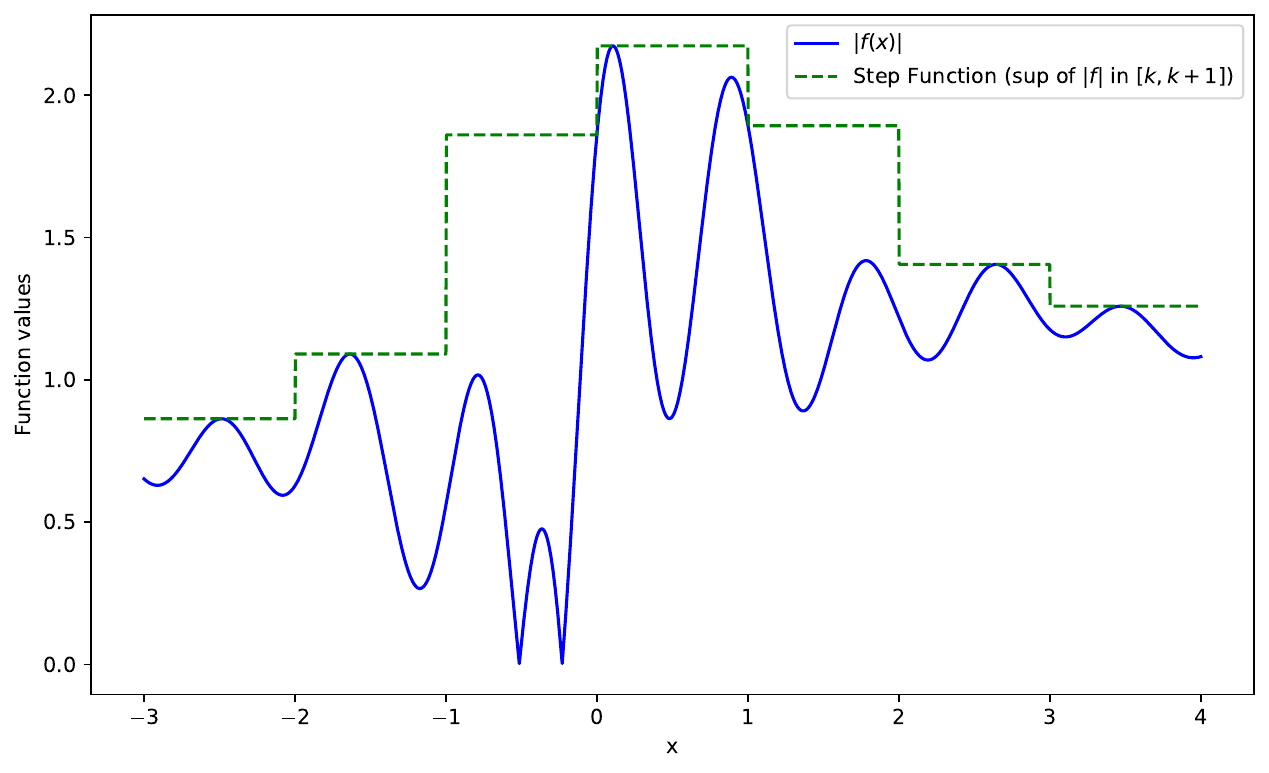}
\caption{Visualization of the construction of the Wiener amalgam norm
  of a continuous function $f$.  
}\label{fig:wam}
\end{figure} \\
\indent In the context of the discretization of the \ft\ this space arises
  naturally, because both sampling and periodization are well defined. 
If $f\in W(\mathcal{C},\ell ^1)$, then the sampling operator $f \mapsto
(f(hj))_{j\in\Z}$ maps $W(\mathcal{C},\ell ^1)$ to $\ell ^1(\mathbb{Z})$  for all $h>0$,  and~\footnote{Here  $\lceil\cdot\rceil$ denotes the ceiling  function.}
$$
  \sum _{j\in \Z } |f(jh)| \leq \lceil \tfrac{1}{h}\rceil
  \|f\|_{W(\mathcal{C},\ell ^1)} \, ,
  $$
  Consequently the $p$-periodization 
\begin{equation*}
\cPp f (x)= \sum_{l\in\Z} f(x+pl)
\end{equation*}
converges absolutely and uniformly for every $p>0$. 

Furthermore, if  $f,\hat{f} \in W(\mathcal{C}, \ell ^1)$, then  the
Poisson summation formula 
\begin{equation*}
\sum _{l\in \Z} f(x+pl) = 
\frac{1}{p}\sum _{k\in \Z} \hat{f}(\tfrac{k}{p}) e^{2\pi i \frac{k}{p}x}
\end{equation*}
holds pointwise for
\emph{all} $x\in \R $, cf.~\cite[Lemma 4]{gro99}.  No simple,
  larger space seems to be known on which the Poisson summation
  formula holds pointwise for all periods $p$.

The following identity  provides the foundation of our approach and
will facilitate the error decomposition in a time and a frequency component. 

\begin{lemma}\label{lemma:op norm sampling00}
If $f,\hat{f}\in W(\mathcal{C},\ell^1)$, then, for $k\in[n]$, 
\begin{equation}\label{eq:2 important identities}
  h\sum_{j\in\Z} f(hj) e^{-2\pi\mathrm{i}\frac{k}{p} hj}
  = h\sum_{j\in[n]}(\cPp f)(hj)e^{-2\pi\mathrm{i}\frac{kj}{n}}=   (\cPh \hat{f})(\tfrac{k}{p})\,.
\end{equation}
\end{lemma}
We may also write the second identity as $\mathcal{F} (\cPp f)_{h,n}=(\cPh \hat{f})_{\frac{1}{p},n}$. 
\begin{proof}
  For the first identity, we  partition $\Z$ into residue classes modulo  $n$ and  use
$p=hn$. Then 
\begin{align*}
h\sum_{j\in\Z} f(hj) e^{-2\pi\mathrm{i}\frac{k}{p}hj} & = h\sum_{j\in[n]} \sum_{l\in\Z} f(hj+hnl) e^{-2\pi\mathrm{i}\frac{k}{hn}(hj+hnl)}\\
& = h\sum_{j\in[n]} \sum_{l\in\Z} f(hj+pl) e^{-2\pi\mathrm{i}\frac{k}{hn}hj}\\
& = h\sum_{j\in[n]} \cPp f (hj) e^{-2\pi\mathrm{i}\frac{k}{n}j}\,.
\end{align*}
The second identity in \eqref{eq:2 important identities} follows from
the Poisson summation formula applied to the $h \inv $-periodization
of  $\hat{f}$, 
\begin{align*}
(\cPh \hat{f})(\tfrac{k}{p}) & = h \sum_{j\in\Z}  \hat{\hat{f}}(hj)e^{2\pi\mathrm{i}hj\frac{k}{p}}\\
& = h \sum_{j\in\Z}  f(hj)e^{-2\pi\mathrm{i}hj\frac{k}{p}}\,.
\end{align*}
This is again the left-hand-side of \eqref{eq:2 important identities}.
\end{proof}
Lemma \ref{lemma:op norm sampling00} enables a reinterpretation of the two step approximation strategy via quadrature and truncation \eqref{eq:quad and trunc}. The quadrature step compares the samples of the Fourier transform $\hat{f}(\frac k p)$ to samples of the periodization of the Fourier transform $(\cPh \hat{f})(\frac k p)$,
 which coincide with the discrete Fourier transform  of samples of the
 periodization of $f$, i.e.,  $\mathcal{F} (\cPp f)_{h,n}(k )$. The
 truncation compares the latter  with the discrete Fourier transform
 $\DFT\! f_{h,n}$. Symbolically,  
\begin{equation}\label{eq:instruction}
\hat{f}_{\frac{1}{p},n}\approx (\cPh \hat{f})_{\frac{1}{p},n}=\mathcal{F} (\cPp f)_{h,n} \approx \DFT\! f_{h,n}\,.
\end{equation}
We therefore split the pointwise error in two  components,
\begin{equation}
  \label{eq:pointwise}
|\frac{1}{\sqrt{p}}  \hat{f}(\tfrac k p) -  \DFT\! f_{h,n} (k)| \leq
  \frac{1}{\sqrt{p}} |\hat{f} (\tfrac k p )- \cPh \hat{f} (\tfrac k p
  ) | + |\DFT  \big((\cPp f- f)_{h,n}\big)(k)|\,.
\end{equation}
Taking norms and using that $\DFT$ is unitary, we arrive at 
\begin{equation}\label{eq:bound E tf}
  \begin{split}
  E^{[n]}_h(f) = \|\hat{f}_{\frac{1}{p},n} - \DFT\! f_{h,n}\|
  \leq &\,  \|(\hat{f} - \cPh \hat{f})_{\frac{1}{p},n} \| + \|\DFT
 (\cPp f- f)_{h,n} \| \\
  =&  \,\|(\hat{f} - \cPh \hat{f})_{\frac{1}{p},n} \| + \|(\cPp f- f)_{h,n}\|\,.
  \end{split}
\end{equation}
On the right-hand side of this inequality we have (i) eliminated
$\DFT$ and (ii) split the error in a time component and a frequency
component. Note that the symmetry of this decomposition  matches the
symmetry of the error  $E^{[n]}_h(f) = E^{[n]}_{p^{-1}}(\bar{\hat{f}})$ in Lemma \ref{lemma:symmetry}.  
To derive approximation rates, we must ensure that the samples of $f-\cPp f$ and $\hat{f}-\cPh \hat{f}$ decay sufficiently fast. This can be accomplished by specifying decay conditions on $f$ and $\hat{f}$.

To handle the sampling of a function on a grid simultaneously with  its decay properties, we introduce
 another family of Wiener amalgam spaces. We first quantify the decay of a function by the use of a weight function $v:\R\rightarrow (0,\infty)$. We consider weights $v$ that satisfy\footnote{The condition \eqref{eq:d} is crucial, whereas the conditions \eqref{eq:a}, \eqref{eq:b}, and \eqref{eq:c} can be weakened at the expense of more complicated constants in the error estimates.}
\begin{subequations}\label{eq: weights}
  \begin{align}
   &v \text{ is even: } v(-x)=v(x)\,,  \label{eq:a}\\
   &v\text{ is nondecreasing on the positive axis, i.e., } v(x)\leq v(y), \text{ for } |x|\leq |y| \,, \label{eq:b}\\ 
   &v\text{ is submultiplicative, i.e., } v(x+y)\leq v(x) v(y) \text{ for } x,y \in \R \,, \label{eq:c}\\
   &v^{-1} \text{ is square-summable, } \sum_{l\in\Z}{|v(l)|^{-2}}<\infty \,. \label{eq:d}
  \end{align}
\end{subequations}
The standard  examples of weights that satisfy conditions \eqref{eq:a} to \eqref{eq:d}  are the polynomial weights $v_\alpha(x)=(1+|x|)^\alpha$
for $\alpha>\frac{1}{2}$, and the (sub-)exponential weights
$v_{r,\alpha}(x)=e^{r|x|^\alpha}$, for $0<\alpha\leq 1$ and $r>0$.

Associated to $v$ are  the weighted sequence spaces
\[
\ell^q_v:=\ell^q_v(\Z):=\{ (c_l)_{l\in\Z} \subseteq \C: \sum_{l\in\Z}
|c_l|^q |v(l)|^q <\infty\}\, , 
\]
and the weighted Lebesgue space $L^2_v:=L^2_v(\R)$  of all functions $f$ such that 
$
\|f\|_{L^2_v} =\|f v\|_{L^2} 
$
is finite. For the constant weight $v \equiv 1$ we omit the
subscript and write $\ell ^p$ and $L^p$.

The Wiener amalgam space $W(\mathcal{C},\ell^q_v)$ consists of all continuous functions $f$ such that 
\begin{equation}\label{eq:def WA}
\|f\|_{W(\mathcal{C},\ell^q_v)} := \bigg(\sum_{l\in\Z} \sup_{x\in[0,1]} \left| f(x+l)\right|^q |v(l)|^q\bigg)^{1/q}
\end{equation}
is finite. For the theory and applications of Wiener amalgam spaces we
recommend ~\cite{Feichtinger00,Feichtinger2,Fournier}. 

If $v$ satisfies \eqref{eq:d} then
$\ell^2_v\subseteq \ell^1$, and we obtain the continuous embeddings
$W(\mathcal{C},\ell^2_v)\subseteq
W(\mathcal{C},\ell^1)\subseteq L^1(\R)$.

\subsection{Towards the main error estimates} \label{sec:mainthm}

In order to bound the right-hand side of the error functional $E^{[n]}_h(f)$  in  \eqref{eq:bound E tf}, we need to estimate the deviation of $f$ and $\hat{f}$ from their respective periodizations. We derive such bounds in terms of the function
\begin{equation}\label{eq:phi}
\Phi_v(p):=\bigg(2\sum_{m=0}^\infty |v(p m+\tfrac{p}{2})|^{-2}\bigg)^{1/2}\,.
\end{equation}
Observe that $\lim_{p \to \infty}\Phi_v(p)= 0$ always holds for a weight  $v$ satisfying \eqref{eq: weights}. For polynomial and sub-exponential weights we will obtain quantitative bounds on  the decay of $\Phi_v(p)$ in Lemma~\ref{lemma:bound poly} and in Lemma~\ref{lemma:Phi subexp}.
\begin{lemma}\label{lemma:op norm sampling}
If $f\in W(\mathcal{C},\ell^2_v)$  for a weight $v$ satisfying \eqref{eq: weights}, then
\begin{align*}
\tag{i}\|f-\cPp f\|_{L^2(-\frac{p}{2},\frac{p}{2})} & \leq v(1)\Phi_v(p)\|f\|_{W(\mathcal{C},\ell^2_v)},\\
\tag{ii} \|(f-\cPp f)_{h,n}\|  & \leq  v(1) \Phi_v(p) (1+h)^{\frac{1}{2}} \|f\|_{W(\mathcal{C},\ell^2_v)}\,.\label{eq:bound Phi}
\end{align*}
\end{lemma}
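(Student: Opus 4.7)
The plan is to derive both inequalities from a single pointwise estimate of the aliasing error $f(x) - \cPp f(x) = -\sum_{l \neq 0} f(x+pl)$, obtained by introducing the weight as a splitting factor and invoking Cauchy--Schwarz:
\begin{equation*}
|f(x) - \cPp f(x)|^2 \leq \Big(\sum_{l \neq 0} v(x+pl)^{-2}\Big) \cdot \sum_{l \neq 0} |f(x+pl)|^2 v(x+pl)^2.
\end{equation*}
For $|x| \leq p/2$ and $l \neq 0$, the evenness and monotonicity \eqref{eq:a}--\eqref{eq:b} of $v$ give $v(x+pl) \geq v(p|l| - p/2)$, so after reindexing by $m = |l| - 1$ the first factor is at most $\Phi_v(p)^2$ by the very definition \eqref{eq:phi}.

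For (i), I would integrate this pointwise bound over $(-p/2, p/2)$ and exchange the integral with the sum in $l$. The shifted intervals $[pl - p/2, pl + p/2]$ for $l \neq 0$ tile $\R \setminus [-p/2, p/2]$, so the right-hand side collapses to $\Phi_v(p)^2 \|f\|_{L^2_v}^2$. It then remains to invoke the slice-wise embedding $\|f\|_{L^2_v} \leq v(1) \|f\|_{W(\mathcal{C},\ell^2_v)}$, which is proved by bounding $\int_l^{l+1} |f(x)|^2 v(x)^2 \, dx \leq v(1)^2 v(l)^2 \sup_{t \in [0,1]} |f(l+t)|^2$ via submultiplicativity \eqref{eq:c} and summing in $l$.

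For (ii), I would apply the same pointwise bound at the sample points $x = hj$, $j \in [n]$, and take the discrete $\ell^2$-norm $h \sum_{j\in[n]} |\cdot|^2$. The crucial observation is that since $p = hn$, the shifted grid points $hj + pl = h(j+nl)$ with $j \in [n]$ and $l \neq 0$ are precisely $\{hm : m \in \Z \setminus [n]\}$, so the double sum collapses onto the full lattice:
\begin{equation*}
\|(f - \cPp f)_{h,n}\|^2 \leq \Phi_v(p)^2 \, h \sum_{m \in \Z \setminus [n]} |f(hm)|^2 v(hm)^2 \leq \Phi_v(p)^2 \, h \sum_{m \in \Z} |f(hm)|^2 v(hm)^2.
\end{equation*}
A weighted sampling inequality then closes the argument: each unit interval $[k, k+1]$ contains at most $(1+h)/h$ points of $h\Z$, and on that interval $v(hm)^2 \leq v(1)^2 v(k)^2$ by submultiplicativity. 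Summing in $k$ reassembles the amalgam norm and produces the advertised factor $(1+h)^{1/2}$.

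The main obstacle is the bookkeeping in part (ii): recognizing the reindexing $j+nl \leftrightarrow m$ that merges the double sum into a single sum over $h\Z$, and tracking the constant $1+h$ in the sampling embedding of $W(\mathcal{C},\ell^2_v)$ into the discretely weighted $\ell^2$ space. Both are routine, but the identity $p = hn$ is the hinge that keeps them clean and makes the symmetric structure of the estimate manifest.
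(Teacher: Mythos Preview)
Your proposal is correct and follows essentially the same route as the paper's proof: the same Cauchy--Schwarz splitting, the same bound $\sum_{l\neq 0} v(x+pl)^{-2} \leq \Phi_v(p)^2$ for $|x|\leq p/2$, the same reindexing $j+nl \leftrightarrow m \in \Z\setminus[n]$ in part~(ii), and the same weighted sampling estimate via submultiplicativity. The only cosmetic difference is that the paper counts $\lceil h^{-1}\rceil$ grid points per unit interval and then uses $h\lceil h^{-1}\rceil \leq 1+h$, whereas you go directly to the bound $(1+h)/h$; the outcome is identical.
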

Part (i) is  used only  in Section \ref{sec:bandlimited} on
interpolation, but Part (ii) is crucial for bounds on $E^{[n]}_h$.  
\begin{proof}
(i) We bound $(f-\cPp f)(x)= \sum_{0\neq l\in\Z} f(x+pl)$ by the Cauchy-Schwarz inequality,
\begin{equation}\label{eq:x hj 0}
\bigg|\sum_{0\neq l\in\Z} f(x+pl) \bigg|^2
  \leq \sum_{0\neq m\in\Z} |v(x+pm)|^{-2}\sum_{0\neq l\in\Z} \left|f(x+pl) \right|^2|v(x+pl)|^2\,.
 \end{equation}
Since $v$ is even \eqref{eq:a} and nondecreasing \eqref{eq:b}, for
$|x|\leq \frac{p}{2}$, the first sum can be majorized  by
\begin{equation}\label{eq:x hj}
\begin{split}
\sum_{0\neq m\in\Z}|v(x+pm)|^{-2} & \leq \sum_{0\neq m\in\Z} |v(p|m|-\tfrac{p}{2})|^{-2}\\
&\leq 2\sum_{m=1}^\infty |v(pm-\tfrac{p}{2})|^{-2} = |\Phi_v(p)|^2\,.
\end{split}
\end{equation}
Integration of \eqref{eq:x hj 0} yields
\begin{align*}
\int_{-\frac{p}{2}}^{\frac{p}{2}}\bigg|\sum_{0\neq l\in\Z} f(x+pl) \bigg|^2\mathrm{d}x & \leq |\Phi_v(p)|^2 \int_{-\frac{p}{2}}^{\frac{p}{2}} \sum_{0\neq l\in\Z} \left|f(x+pl) \right|^2|v(x+pl)|^2\mathrm{d}x\\
& = |\Phi_v(p)|^2 \int_{|x|>\frac{p}{2}} \left|f(x) \right|^2|v(x)|^2\mathrm{d}x\\
& \leq |\Phi_v(p)|^2 \|f\|^2_{L^2_v}\,.
\end{align*}
Direct computations lead to $\|f\|_{L^2_v}\leq v(1)\|f\|_{W(\mathcal{C},\ell^2_v)}$. 

(ii) We use \eqref{eq:x hj 0} and \eqref{eq:x hj} for $x=hj$ and sum over $j\in[n]$, so that $p=nh$ yields 
 \begin{align*}
 \sum_{j\in[n]} \bigg( \sum_{0\neq l\in\Z}\left| f(hj+pl)\right|\bigg)^2 & \leq |\Phi_v(p)|^2 \sum_{j\in[n]}  \sum_{0\neq l\in\Z} \left|f(hj+pl) \right|^2 |v(hj+pl)|^2\\
  & = |\Phi_v(p)|^2  \sum_{j\in\Z\setminus[n]} \left|f(hj) \right|^2 |v(hj)|^2\\
  & \leq |\Phi_v(p)|^2  \sum_{j\in\Z} \left|f(hj) \right|^2 |v(hj)|^2\,.
 \end{align*}
Since $v$ satisfies \eqref{eq:a} -- \eqref{eq:c}, we
derive
  \begin{align*}
\sum_{j\in\Z} \left|f(hj) \right|^2 |v(hj)|^2
&\leq  \lceil h^{-1}\rceil \sum_{l\in\Z} \sup_{x\in[l,l+1]}\left|f(x)\right|^2|v(|l|+1)|^2\\
&\leq \lceil h^{-1}\rceil |v(1)|^2\sum_{l\in\Z} \sup_{x\in[0,1]}\left|f(x+l)\right|^2|v(l)|^2\,.
\end{align*}
The observation $h \lceil h^{-1}\rceil \leq (1+h)$ concludes the proof. 
\end{proof}

\subsection{The main theorem} \label{sec:mainresult}

After these preparations we can now formulate our main result. This is
an error estimate for the approximation of the Fourier transform by
the discrete Fourier transform under very general conditions on
the decay of the function and its Fourier transform. 

\begin{theorem}\label{tm:WA}
Let  $f\in W(\mathcal{C},\ell^2_v)$ and $\hat{f}\in
W(\mathcal{C},\ell^2_w)$. If the  weights $v,w$ satisfy \eqref{eq: weights}, then 
\begin{equation}\label{eq:weigtsrest22}
E^{[n]}_{h}(f)\leq 
v(1)\Phi_v(p) (1+h)^{\frac 1 2} \|f\|_{W(\mathcal{C},\ell^2_{v})}+
w(1)\Phi_w(h^{-1}) (1+\tfrac 1 p)^{\frac1 2} \|\hat{f}\|_{W(\mathcal{C},\ell^2_{w})}\,. 
\end{equation}
\end{theorem}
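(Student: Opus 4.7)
The plan is a direct application of the error decomposition \eqref{eq:bound E tf} combined with Lemma~\ref{lemma:op norm sampling}(ii), applied once to $f$ and once to $\hat{f}$. The time--frequency symmetry baked into the error $E^{[n]}_h(f) = E^{[n]}_{p^{-1}}(\bar{\hat{f}})$ is precisely what makes this work: the two summands in \eqref{eq:bound E tf} are structurally identical with the roles of $(h,p,v,f)$ and $(p^{-1},h^{-1},w,\hat{f})$ swapped.

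First I would start from the decomposition established in \eqref{eq:bound E tf}, namely
\begin{equation*}
E^{[n]}_h(f) \leq \|(\hat{f}-\cPh \hat{f})_{\frac{1}{p},n}\| + \|(\cPp f - f)_{h,n}\|.
\end{equation*}
Since all the heavy lifting (Poisson summation, Cauchy--Schwarz on the tail, and the passage from continuous $L^2_v$ to the Wiener amalgam norm) has been done in Lemma~\ref{lemma:op norm sampling}, the task reduces to applying part (ii) of that lemma to each summand separately.

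For the second summand I would apply Lemma~\ref{lemma:op norm sampling}(ii) directly to $f$ with step size $h$ and period $p = hn$, yielding
\begin{equation*}
\|(\cPp f - f)_{h,n}\| \leq v(1)\,\Phi_v(p)\,(1+h)^{1/2}\,\|f\|_{W(\mathcal{C},\ell^2_v)}.
\end{equation*}
For the first summand I would apply the same lemma to $\hat{f}$ with the roles of time and frequency interchanged: the sampling step is $1/p$, the periodization period is $h^{-1}$, and the number of samples is again $n$, consistent with the identity $(h^{-1})\cdot(1/p)^{-1} = hn\cdot h^{-1}\cdot p^{-1}\cdot p = n$. The hypothesis $\hat{f}\in W(\mathcal{C},\ell^2_w)$ with $w$ satisfying \eqref{eq: weights} makes Lemma~\ref{lemma:op norm sampling}(ii) applicable, producing
\begin{equation*}
\|(\hat{f}-\cPh \hat{f})_{\frac{1}{p},n}\| \leq w(1)\,\Phi_w(h^{-1})\,(1+\tfrac{1}{p})^{1/2}\,\|\hat{f}\|_{W(\mathcal{C},\ell^2_w)}.
\end{equation*}
Summing these two estimates yields \eqref{eq:weigtsrest22}.

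Given how much preparatory work has already been absorbed into the decomposition \eqref{eq:bound E tf} and into Lemma~\ref{lemma:op norm sampling}, the argument is essentially a bookkeeping exercise. The only point requiring a bit of care, and the place where I would slow down when writing the proof, is to verify cleanly that the frequency-side invocation of Lemma~\ref{lemma:op norm sampling}(ii) is legitimate: one must check that the hypotheses on $w$ are the ones guaranteed, that the frequency grid $\{k/p : k\in [n]\}$ plays the role of the time grid $\{hj:j\in [n]\}$ in the lemma, and that the dual period for the Poisson summation is indeed $h^{-1}$, consistent with the definition of $\cPh$ used implicitly in Lemma~\ref{lemma:op norm sampling00}. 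Once this identification is explicit, no further estimation is needed.
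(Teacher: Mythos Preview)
Your proposal is correct and matches the paper's own proof essentially line for line: start from the decomposition \eqref{eq:bound E tf}, apply Lemma~\ref{lemma:op norm sampling}(ii) to $f$ with parameters $(h,p,v)$ to bound the time component, then apply it to $\hat{f}$ with the swapped parameters $(p^{-1},h^{-1},w)$ to bound the frequency component, and add. Your remark that the only point worth pausing on is the clean identification of the dual parameters (the relation $h^{-1}=(1/p)\cdot n$ replacing $p=hn$) is exactly the check the paper abbreviates as ``interchanging the roles of $f$, $p$, and $h$ with $\hat{f}$, $h^{-1}$, and $p^{-1}$.''
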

For  $0<h \leq 1 \leq p$, we obtain
\begin{equation}
  \label{eq:weigtsrest}
  E^{[n]}_{h}(f)\leq 
2^{\frac 1 2}v(1)\Phi_v(p) \|f\|_{W(\mathcal{C},\ell^2_{v})}+
2^{\frac 1 2}w(1)\Phi_w(h^{-1})  \|\hat{f}\|_{W(\mathcal{C},\ell^2_{w})}\,,
\end{equation}
so that the first term does not depend on $h$
and the second term not on $p$. 

We now prove Theorem \ref{tm:WA}.
\begin{proof}
Recall the bound $
E^{[n]}_{h}(f) \leq \| (\hat{f} -\cPh \hat{f})_{\frac{1}{p},n}\|+\|  (f - \cPp f)_{h,n}\|
$  in \eqref{eq:bound E tf}. We apply  Lemma \ref{lemma:op norm sampling} (ii) to both $f$ and $\hat{f}$. 
Lemma \ref{lemma:op norm sampling} estimates $\|  (f - \cPp
f)_{h,n}\|$ and yields the first term of the error in
\eqref{eq:weigtsrest22}. Applying Lemma~\ref{lemma:op norm sampling} to
$\| (\hat{f} -\cPh \hat{f})_{\frac{1}{p},n}\|$ and interchanging  the
roles of $f$, $p$, and $h$ with  $\hat{f}$, $h^{-1}$, and $p^{-1}$,
yields the second term. This concludes the proof.
\end{proof}

Theorem \ref{tm:WA} asserts a  bound on the error $E^{[n]}_h(f)$ in
terms of $\Phi_v(p)$ and $\Phi_w(h^{-1})$. For useful conclusions we
need to unravel the error function $\Phi _v$, and we will do this   
for polynomial and sub-exponential weights.   The resulting  error
estimates can then  be
made completely  explicit.

\subsection{Polynomial weights} \label{sec:poly weight}
In this section we  consider the  polynomial weights $v_\alpha(x)=(1+|x|)^\alpha$ and derive bounds on $\Phi_{v_\alpha}(p)$ in \eqref{eq:phi}. 
\begin{lemma}\label{lemma:bound poly}
If $\alpha>\frac{1}{2}$, then 
\begin{equation*}
\Phi_{v_\alpha}(p) 
  \leq p^{-\alpha} 2^{\alpha +1/2}
\Big(1+\frac{1}{4\alpha -2}\Big)^{1/2} \lesssim p^{-\alpha}\, .
\end{equation*}
\end{lemma}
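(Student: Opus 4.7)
}
The plan is a direct computation starting from the definition $\Phi_v(p) = \bigl(2\sum_{m=0}^\infty |v(pm + \tfrac{p}{2})|^{-2}\bigr)^{1/2}$, specialized to the polynomial weight $v_\alpha(x) = (1+|x|)^\alpha$. The first step is to pull out $p^{-\alpha}$ by observing that $1 + pm + \tfrac{p}{2} \geq p(m + \tfrac{1}{2})$, whence
\[
|v_\alpha(pm + \tfrac{p}{2})|^{-2} = (1 + pm + \tfrac{p}{2})^{-2\alpha} \leq p^{-2\alpha}(m+\tfrac{1}{2})^{-2\alpha}.
\]
Summing over $m \geq 0$ and recognizing the Hurwitz zeta function immediately gives
\[
\Phi_{v_\alpha}(p) \leq p^{-\alpha}\Bigl(2\sum_{m=0}^\infty (m+\tfrac{1}{2})^{-2\alpha}\Bigr)^{1/2} = p^{-\alpha}\sqrt{2\zeta(2\alpha,\tfrac{1}{2})}\,,
\]
which is the first claimed inequality.

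For the second inequality I would split off the $m=0$ term, which contributes $(\tfrac 1 2)^{-2\alpha} = 2^{2\alpha}$, and estimate the tail by an integral: since $x \mapsto (x + \tfrac{1}{2})^{-2\alpha}$ is decreasing,
\[
\sum_{m=1}^\infty (m+\tfrac{1}{2})^{-2\alpha} \leq \int_0^\infty (x + \tfrac{1}{2})^{-2\alpha}\,\mathrm{d}x = \frac{(1/2)^{1-2\alpha}}{2\alpha - 1} = \frac{2^{2\alpha-1}}{2\alpha - 1}\,,
\]
which converges precisely under the hypothesis $\alpha > \tfrac{1}{2}$. Combining the two contributions,
\[
\zeta(2\alpha,\tfrac{1}{2}) \leq 2^{2\alpha}\Bigl(1 + \frac{1}{4\alpha - 2}\Bigr)\,,
\]
and taking square roots (after the factor of $2$) yields the explicit constant $2^{\alpha + 1/2}(1 + \tfrac{1}{4\alpha - 2})^{1/2}$. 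The final $\lesssim p^{-\alpha}$ is then just the observation that this constant depends only on $\alpha$.

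I do not expect a real obstacle here: the only delicate point is checking the integral comparison for the Hurwitz tail (making sure the lower limit and the decreasing property are used correctly so that the bound is actually an upper bound, not a lower one), and confirming that convergence requires $\alpha > 1/2$, which matches the weight condition \eqref{eq:d} in \eqref{eq: weights}. Everything else is bookkeeping of constants.
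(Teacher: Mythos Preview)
Your proposal is correct and essentially identical to the paper's proof: the paper also factors out $p$ (writing $1+pm+\tfrac{p}{2}=p(\tfrac{1}{p}+m+\tfrac{1}{2})$ and then dropping the $\tfrac{1}{p}$, which is the same inequality you use), and then bounds $\zeta(2\alpha,\tfrac{1}{2})$ by splitting off the $m=0$ term and applying the integral comparison $\sum_{m\geq 1}(m+\tfrac{1}{2})^{-2\alpha}\leq\int_0^\infty (x+\tfrac{1}{2})^{-2\alpha}\,\mathrm{d}x$.
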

\begin{proof}
We factor out $p$ to obtain
\begin{align*}
 |\Phi_{v_\alpha}(p)|^2=2\sum_{m=0}^\infty
 \left(1+\left|pm+\tfrac{p}{2}\right|\right)^{-2\alpha}  &\leq p^{-2\alpha}
2 \sum_{m=0}^\infty \left(\tfrac{1}{p}+m+\tfrac{1}{2}\right)^{-2\alpha}\\ 
& \leq p^{-2\alpha}2\sum_{m=0}^\infty \left(m+\tfrac{1}{2}\right)^{-2\alpha}\,.
\end{align*}
This sum has the shape of the Hurwitz zeta function
$\zeta(s,t)=\sum_{m=0}^\infty (m+t)^{-s}$ at $t=1/2$. The integral test for convergence of series provides, for $\alpha >1/2$,
  \begin{align*}
    \zeta(2\alpha,\tfrac 1 2)  \leq 2^{2\alpha} + \int_0^\infty (x+\tfrac{1}{2})^{-2\alpha}\mathrm{d}x
     = 2^{2\alpha} (1+ \tfrac{1}{4\alpha-2})\,.
  \end{align*}
\end{proof}
For polynomial weights Theorem \ref{tm:WA} takes the following shape.

\begin{theorem}[polynomial weights]\label{cor:poly a and b}
Assume that  $f\in W(\mathcal{C},\ell^2_{v_\alpha})$ and $\hat{f}\in W(\mathcal{C},\ell^2_{v_\beta})$ for $\alpha,\beta>\frac{1}{2}$, and $0<h\leq 1\leq p$. 
\begin{itemize}
\item[(i)] Set $c_s = 2^{2s +1} \left(1+\tfrac{1}{4s-2}\right)^{\frac 1 2}$. Then 
\begin{equation*}
    E^{[n]}_{h}(f)\leq
    c_\alpha p^{-\alpha}  \|f\|_{W(\mathcal{C},\ell^2_{v_\alpha})}+
    c_\beta
    h^\beta \|\hat{f}\|_{W(\mathcal{C},\ell^2_{v_\beta})}\,.
\end{equation*}
\item[(ii)] If the step size satisfies $h\asymp n^{-\frac{\alpha}{\alpha+\beta}}$, then 
\begin{equation*}
E^{[n]}_{h}(f) \lesssim h^\beta
\big(\|f\|_{W(\mathcal{C},\ell^2_{v_\alpha})}+\|\hat{f}\|_{W(\mathcal{C},\ell^2_{v_\beta})}\big)\,
,
\end{equation*}
where the constant in $\lesssim$ is independent of $h,n,p,f$, but may depend on $\alpha,\beta$. 
Expressed with the number of samples, the error is $$
E^{[n]}_{h}(f)
\lesssim n^{-\frac{\alpha\beta}{\alpha +\beta }}
$$
with a constant independent of $h,n,p$. 
\end{itemize}
\end{theorem}

Part (ii) can be made more explicit by collecting all constants, leading to the ideal choice of $h$ in relation to properties of $f$.
\begin{corollary}[explicit constants]\label{cor:poly a and b explicit}
Under the assumptions of Theorem \ref{cor:poly a and b}, we make the
choice  
\begin{equation*}
h =   \left(\frac{c_\alpha  \|f\|_{W(\mathcal{C},\ell^2_{v_\alpha})} }{c_\beta \|\hat{f}\|_{W(\mathcal{C},\ell^2_{v_\beta})}} \right) ^{\frac{1}{\alpha+\beta}}n^{-\frac{\alpha}{\alpha+\beta}}\,,
\end{equation*}
where $c_s = 2^{2s +1} \left(1+\tfrac{1}{4s-2}\right)^{\frac 1 2}$. Then the resulting error bound becomes
\begin{equation*}
E^{[n]}_{h}(f) \leq 2 \left(c_\alpha \|f\|_{W(\mathcal{C},\ell^2_{v_\alpha})}\right)^{\frac{\beta}{\alpha+\beta}}  \left(c_\beta \|\hat{f}\|_{W(\mathcal{C},\ell^2_{v_\beta})}\right)^{\frac{\alpha}{\alpha+\beta}} n^{-\frac{\alpha\beta}{\alpha+\beta}}\,.
\end{equation*}
\end{corollary}
This bound fully matches the symmetry $E^{[n]}_{h}(f) = E^{[n]}_{p^{-1}}(\bar{\hat{f}})$ recognized in Lemma \ref{lemma:symmetry}. 

\begin{proof}
(i) The assumption $h\leq 1\leq p$ implies that $\sqrt{1+h} \leq
\sqrt{2}  $ and $\sqrt{1+p^{-1}} \leq
\sqrt{2}  $. We note $v_\alpha (1) = 2^\alpha $. Using  Lemma
\ref{lemma:bound poly}, the constant $v_\alpha (1)  \Phi _{v_\alpha
}(p) \sqrt{1+h}$ in Theorem~\ref{tm:WA} is then bounded by $2^{2\alpha
  +1} (1+1/(4\alpha -2))^{1/2}$, likewise for the constant involving
$w=v_\beta$.  The error estimate now   follows from Theorem
\ref{tm:WA}. 

(ii) We balance the terms $h^\beta $ and $p^{-\alpha }$. Since
$h=\frac{p}{n}$,  the choice $h\asymp
n^{-\frac{\alpha}{\alpha+\beta}}$  leads to $h^{\beta} \asymp
p^{-\alpha}$ and the overall error is of order $h^\beta $.  
\end{proof}
Theorem  \ref{cor:poly a and b} and Corollary \ref{cor:poly a and b explicit} answer the questions (Q1) and (Q2)
raised in the introduction whenever  $f$ decays polynomially in time
and in frequency.

\begin{remark}
In the literature, e.g.~\cite{boyd,Kaiblinger05}, one can find  the
choice $h\asymp \frac{1}{\sqrt{n}}$. It is optimal for 
$\alpha=\beta$. If $\alpha\neq \beta$, then $h\asymp
n^{-\frac{\alpha}{\alpha+\beta}}$ leads to better bounds. 
That is, knowledge on the decay of $f$ and $\hat{f}$ allows us to
refine the spacing, and Theorem~\ref{cor:poly a and b} 
explains how to choose the optimal parameters.  
\end{remark}

In our experience, amalgam spaces are the natural theoretical framework when dealing with
sampling and periodization. Yet one may wish to have conditions that
are easier to check in practice. One such condition is a pure decay
condition of the form $\sup_{x\in\R}|f(x)|(1+|x|)^{a}<\infty$ as in Theorem \ref{tm:intro0}. 
\begin{proof}[Proof of Theorem \ref{tm:intro0}]
Assume that $\sup_{x\in\R}|f(x)|(1+|x|)^{a}<\infty$  and
$\alpha<a-\frac{1}{2}$. We show that $f\in W(\mathcal{C}, \ell
^2_{v_\alpha })$. H\"older's inequality yields
\begin{equation} \label{later1}
\sum_{l\in\Z} \sup_{x\in[0,1]} \left| f(x+l)\right|^2 (1+|l|)^{2\alpha}  \lesssim \sup_{x\in\R}|f(x)|^2(1+|x|)^{2a}\sum_{l\in\Z}(1+|l|)^{-2(a-\alpha)}\,.
\end{equation}
Since  $a-\alpha>\frac{1}{2}$, the series
$\sum_{l\in\Z}(1+|l|)^{-2(a-\alpha)}<\infty$ converges, so that 
\begin{equation*}
    \|f\|_{W(\mathcal{C},\ell^2_{v_\alpha})} \lesssim \sup_{x\in\R}|f(x)|(1+|x|)^{a},
\end{equation*}
and likewise for $\hat{f}$. Therefore Theorem~\ref{tm:intro0} is a
special case of  Theorem~\ref{cor:poly a and b}. 
\end{proof}

\subsection{Sub-exponential weights}
Next, we  consider  sub-exponential weights of the form
$v_{r,\alpha}(x)=e^{r|x|^\alpha}$.
 We first find a
bound for $\Phi_{v_{r,\alpha}}(p)$ in \eqref{eq:phi}. We directly derive
\begin{equation}\label{eq:subexp 00}
|\Phi_{v_{r,\alpha}}(p)|^2 = 2\sum_{m=0}^\infty e^{-2r\left(pm+\frac{p}{2}\right)^\alpha}    = e^{-2r(\frac{p}{2})^\alpha}2\sum_{m=0}^\infty e^{-2r(\frac{p}{2})^\alpha \left((2m+1)^\alpha-1\right)}\,.
\end{equation}
For fixed $0<\alpha\leq 1$, the series $\sum_{m=0}^\infty e^{-2r(\frac{p}{2})^\alpha \left((2m+1)^\alpha-1\right)}$ converges for every $p>0$. As a function of $p$, it is monotonically decreasing, so that 
\begin{equation}\label{eq:bound subexp}
|\Phi_{v_{r,\alpha}}(p)|^2 \leq e^{-2r(\frac{p}{2})^\alpha}2\sum_{m=0}^\infty e^{-2r(\frac{1}{2})^\alpha \left((2m+1)^\alpha-1\right)}\,,
\end{equation}
for $p\geq 1$. 
To derive more explicit bounds,
we use
the incomplete Gamma function
$\Gamma(s,u)=\int_u^\infty t^{s-1}e^{-t}\mathrm{d}t$.
\begin{lemma}\label{lemma:Phi subexp}
Assume that $0<\alpha\leq 1$ and $r>0$. 
\begin{itemize}
\item[(i)] If $p\geq 1$, then
\begin{equation*}
\Phi_{v_{r,\alpha}}(p) \leq e^{-r (\frac{p}{2})^\alpha}
\Big(2+\frac{1}{\alpha}\frac{e^{r2^{1-\alpha}}}{(r2^{1-\alpha})^{1/\alpha}} \Gamma(\tfrac{1}{\alpha},r2^{1-\alpha})\Big)^{1/2}\,.
\end{equation*}
\item[(ii)] If $p\geq \big(\frac{2^\alpha}{2r\alpha}\big)^{1/\alpha}$, then 
\begin{equation*}
\Phi_{v_{r,\alpha}}(p) \leq e^{-r (\frac{p}{2})^\alpha}  \Big(2+\frac{2^{\alpha}}{2\alpha^2 rp^\alpha}\Big)^{1/2} \leq e^{-r (\frac{p}{2})^\alpha} 
\Big(2+\frac{1}{\alpha}\Big)^{1/2}\,.
\end{equation*}
\end{itemize}
\end{lemma}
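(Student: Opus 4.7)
The plan is to start from the estimate \eqref{eq:subexp 00}, which already factors out the leading exponential $e^{-2r(p/2)^\alpha}$. In both parts the remaining work is to bound the tail $2\sum_{m=1}^\infty e^{-\gamma((2m+1)^\alpha-1)}$ (with $\gamma = 2r(p/2)^\alpha$ for part (ii), or $\gamma$ replaced by the $p$-independent value $\beta = r2^{1-\alpha}$ for part (i), which is precisely the content of \eqref{eq:bound subexp}). Since the $m=0$ term in the original sum contributes exactly the $2$ inside the parentheses, it suffices to estimate the sum starting at $m=1$.

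The core computation is the integral-test estimate $\sum_{m=1}^\infty e^{-\gamma(2m+1)^\alpha} \leq \int_0^\infty e^{-\gamma(2x+1)^\alpha}\mathrm{d}x$, valid because $x \mapsto e^{-\gamma(2x+1)^\alpha}$ is decreasing. Substituting $t = \gamma(2x+1)^\alpha$ (so $2x+1 = \gamma^{-1/\alpha}t^{1/\alpha}$ and $\mathrm{d}x = \frac{1}{2\alpha\gamma^{1/\alpha}} t^{1/\alpha - 1}\mathrm{d}t$), the integral becomes $\frac{1}{2\alpha \gamma^{1/\alpha}}\int_\gamma^\infty t^{1/\alpha - 1}e^{-t}\mathrm{d}t = \frac{\Gamma(1/\alpha,\gamma)}{2\alpha \gamma^{1/\alpha}}$. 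After multiplying by the factor $2e^\gamma$ coming from the $-1$ in the exponent, this yields $\frac{e^\gamma\Gamma(1/\alpha,\gamma)}{\alpha \gamma^{1/\alpha}}$. For part (i), I simply substitute $\gamma = \beta = r2^{1-\alpha}$ and take the square root, which matches the claimed expression verbatim.

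For part (ii), I have to replace the incomplete Gamma function by the quantitative bound $\Gamma(1/\alpha,\gamma) \leq \frac{1}{\alpha}\gamma^{1/\alpha-1}e^{-\gamma}$, which holds once $\gamma \geq 1/\alpha$. This is the main obstacle: I derive it by the shift $t = \gamma + v$, write $(\gamma+v)^{1/\alpha - 1} \leq \gamma^{1/\alpha-1}(1+v/\gamma)^{1/\alpha-1} \leq \gamma^{1/\alpha-1}e^{(1/\alpha-1)v/\gamma}$, and integrate the resulting exponential $\int_0^\infty e^{-v(1-(1/\alpha-1)/\gamma)}\mathrm{d}v = (1 - (1/\alpha-1)/\gamma)^{-1}$; the assumption $\gamma \geq 1/\alpha$ ensures $1 - (1/\alpha-1)/\gamma \geq \alpha$, producing the factor $1/\alpha$. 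Substituting this into the previous display gives the tail bound $\frac{1}{\alpha^2\gamma}$, and writing $\gamma = 2r(p/2)^\alpha = 2^{1-\alpha}rp^\alpha$ converts it to $\frac{2^\alpha}{2\alpha^2 rp^\alpha}$.

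Finally I verify that the hypothesis $p \geq (2^\alpha/(2r\alpha))^{1/\alpha}$ both guarantees $\gamma \geq 1/\alpha$ (needed to invoke the Gamma estimate) and implies $\frac{2^\alpha}{2r\alpha p^\alpha} \leq 1$, so the bound $\frac{2^\alpha}{2\alpha^2 rp^\alpha} = \frac{1}{\alpha}\cdot\frac{2^\alpha}{2r\alpha p^\alpha}$ simplifies to $\leq \frac{1}{\alpha}$, yielding the second inequality in part (ii). Taking a square root then produces both claimed bounds.
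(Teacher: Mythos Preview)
Your proof is correct and follows essentially the same route as the paper: both arguments start from \eqref{eq:subexp 00}/\eqref{eq:bound subexp}, isolate the $m=0$ term, bound the tail by the integral test, express the result via the incomplete Gamma function, and then (for part~(ii)) invoke the estimate $\Gamma(1/\alpha,\gamma)\leq \tfrac{1}{\alpha}\gamma^{1/\alpha-1}e^{-\gamma}$ for $\gamma\geq 1/\alpha$. Your derivation of that Gamma bound via $(1+v/\gamma)^{1/\alpha-1}\leq e^{(1/\alpha-1)v/\gamma}$ is exactly the tangent-line argument the paper reproduces from Pinelis, just written in a different coordinate; the two computations are line-by-line equivalent.
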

Note that the factor $\big(2+\frac{2^{\alpha}}{2\alpha^2 rp^\alpha}\big)^{1/2}$ tends to $\sqrt{2}$ when $p\rightarrow\infty$. 
\begin{proof}
According to \eqref{eq:subexp 00}, we must bound the series $\sum_{m=0}^\infty e^{-2r(\frac{p}{2})^\alpha \left((2m+1)^\alpha-1\right)}$. To verify Part (ii), consider $p\geq\big(\frac{2^\alpha}{2r\alpha}\big)^{1/\alpha}$. For $u:={2r(\frac{p}{2})^\alpha}$, the integral test for convergence of series yields 
\begin{align*}
\sum_{m=0}^\infty e^{-u \left((2m+1)^\alpha-1\right)} & \leq 1+\int_0^\infty e^{-u\left((2x+1)^\alpha-1\right)} \mathrm{d}x = 1+\frac{1}{2}e^u  \int_1^\infty e^{-u x^\alpha}\mathrm{d}x\,.
\end{align*}
A direct check reveals that $-\frac{1}{\alpha} u^{-\frac 1\alpha}
\Gamma(\frac{1}{\alpha},x^\alpha  u)$ is a primitive of $x\mapsto
e^{-u x^\alpha}$ and therefore $\int _1^\infty e^{-ux^\alpha}\,
\mathrm{d}x = \frac{1}{\alpha} u^{-\frac 1\alpha}
\Gamma(\frac{1}{\alpha},  u)$. 
Hence, we derive
\begin{equation}\label{eq:z bound}
\sum_{m=0}^\infty e^{-u \left((2m+1)^\alpha-1\right)} \leq 1+\frac{1}{2\alpha}\frac{e^u}{u^{1/\alpha}} \Gamma(\tfrac{1}{\alpha},u)\,. 
\end{equation}
The condition $p\geq \big(\frac{2^\alpha}{2r\alpha}\big)^{1/\alpha}$
implies $u =2r(\frac{p}{2})^\alpha  \geq \frac{1}{\alpha}\geq 1$. In that case, the incomplete Gamma function is bounded by 
\begin{equation}\label{eq:pinelis}
\Gamma(\tfrac{1}{\alpha},u)\leq \frac{1}{\alpha} u^{\frac{1}{\alpha}-1} e^{-u}\,,
\end{equation}
see \cite[Proposition 2.7 and Page 1275]{Pinelis}.
For completeness we reproduce the elementary argument: 
 set $a=  \alpha\inv$. The function  $h(t)=(a-1)\ln t-t$ is strictly concave, so it is majorized by its tangent $h_u(t)=h(u)+h'(u)(t-u)$ at $u$. 
Therefore
\begin{equation*}
	\Gamma(a,u)=\int_u^{\infty}e^{h(t)}\,dt
	<\int_u^{\infty}e^{h_u(t)}\,dt =\frac{u^{a-1} e^{-u}}{1-(a-1)/u} \leq a {u^{a-1} e^{-u}}\,,
\end{equation*}
where the last inequality is due to $(1- (a-1)/u) \inv \leq a$ for   $u \geq a \geq 1$. This yields \eqref{eq:pinelis}. 

We substitute \eqref{eq:pinelis} into \eqref{eq:z bound}, use the
assumption $u\geq  1/\alpha $, and  obtain
\begin{align*}
\sum_{m=0}^\infty e^{-u\left((2m+1)^\alpha-1\right)}  & \leq  1+\frac{1}{2\alpha^2u}\leq
        1+\frac{1}{2\alpha}\,.
\end{align*}
Inserting $u={2r(\frac{p}{2})^\alpha}$ and the assumption $p^\alpha\geq \frac{2^\alpha}{2r\alpha}$ lead to
\begin{equation*}
\sum_{m=0}^\infty e^{-2r(\frac{p}{2})^\alpha \left((2m+1)^\alpha-1\right)}\leq 1+\frac{2^{\alpha}}{4\alpha^2 rp^\alpha}\leq 
1+\frac{1}{2\alpha}\,.
\end{equation*}

(i) The bound for $p\geq 1$ is simpler and follows directly from
\eqref{eq:bound subexp} with the choice $u={2r(\frac{1}{2})^\alpha}$
and \eqref{eq:z bound}. 
\end{proof}
For sub-exponential weights Theorem \ref{tm:WA} takes the following
shape. 
\begin{corollary}[sub-exponential weights]\label{cor:subexp a and b}
Assume that $0<\alpha,\beta\leq 1$, that $r,s>0$, and $0<h\leq 1\leq p$. If
$f\in W(\mathcal{C},\ell^2_{v_{r,\alpha}})$ and $\hat{f}\in
W(\mathcal{C},\ell^2_{v_{s,\beta}})$, then the following error
estimates hold. 
\begin{itemize}
\item[(i)] There are constants $c_{r,\alpha}, c_{s,\beta}>0$ such that 
\begin{equation*}
E^{[n]}_{h}(f)\leq c_{r,\alpha} e^{-r(\frac{p}{2})^\alpha} \|f\|_{W(\mathcal{C},\ell^2_{v_{r,\alpha}})}+c_{s,\beta}e^{-s(2h)^{-\beta}} \|\hat{f}\|_{W(\mathcal{C},\ell^2_{v_{s,\beta}})}\,.
\end{equation*}
If $p\geq \big(\frac{2^\alpha}{2r\alpha}\big)^{1/\alpha}$, then we may
choose $c_{r,\alpha}=e^r(4+\frac{2}{\alpha})^{1/2}$, and if $h\leq
\big(\frac{2s\beta}{2^\beta}\big)^{1/\beta}$, then  $c_{s,\beta}=e^s(4+\frac{2}{\beta})^{1/2}$. 
\item[(ii)] If the step size satisfies $h = n^{-\frac{\alpha}{\alpha+\beta}} \left(\tfrac{s}{r}\right)^{\frac{1}{\alpha+\beta}} 2^{\frac{\alpha-\beta}{\alpha+\beta}}$, then 
\begin{equation*}
E^{[n]}_h(f) \lesssim e^{-s(2h)^{-\beta}} \big(\|f\|_{W(\mathcal{C},\ell^2_{v_{r,\alpha}})}+\|\hat{f}\|_{W(\mathcal{C},\ell^2_{v_{s,\beta}})}\big)\,.
\end{equation*}
\end{itemize}
\end{corollary}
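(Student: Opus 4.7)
The plan is to deduce the corollary directly from Theorem~\ref{tm:WA} by plugging in the sub-exponential weights $v = v_{r,\alpha}$ and $w = v_{s,\beta}$ and then controlling the resulting $\Phi$-factors via Lemma~\ref{lemma:Phi subexp}. The structural work is already done in those two results, so what remains is careful constant accounting and an algebraic balancing.

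For part (i), I would first note that $v_{r,\alpha}(1) = e^r$ and $v_{s,\beta}(1) = e^s$, and that the hypothesis $0 < h \leq 1 \leq p$ gives $(1+h)^{1/2}, (1+p^{-1})^{1/2} \leq \sqrt{2}$. Theorem~\ref{tm:WA} then reduces the whole error to controlling the two quantities $\Phi_{v_{r,\alpha}}(p)$ and $\Phi_{v_{s,\beta}}(h^{-1})$. For the first, Lemma~\ref{lemma:Phi subexp}(ii) — applicable once $p \geq (2^\alpha/(2r\alpha))^{1/\alpha}$ — gives $\Phi_{v_{r,\alpha}}(p) \leq e^{-r(p/2)^\alpha}(2 + 1/\alpha)^{1/2}$, so the time contribution is bounded by $e^r \cdot (2 + 1/\alpha)^{1/2} \cdot \sqrt{2} = e^r(4 + 2/\alpha)^{1/2}$, matching the stated $c_{r,\alpha}$. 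The frequency term is handled symmetrically, by applying the same lemma with $p$ replaced by $h^{-1}$ and $(r,\alpha)$ replaced by $(s,\beta)$: the threshold $h^{-1} \geq (2^\beta/(2s\beta))^{1/\beta}$ rewrites as $h \leq (2s\beta/2^\beta)^{1/\beta}$, and the exponent $(1/(2h))^\beta$ is precisely $(2h)^{-\beta}$, yielding $c_{s,\beta} = e^s(4 + 2/\beta)^{1/2}$. Outside those parameter ranges, I would simply appeal to Lemma~\ref{lemma:Phi subexp}(i), which produces finite but less explicit constants, so the qualitative bound in (i) still holds.

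For part (ii), I would balance the two exponentials in (i) by requiring $r(p/2)^\alpha = s(2h)^{-\beta}$. Using $p = hn$, this equation simplifies to $h^{\alpha+\beta} = (s/r) \cdot 2^{\alpha-\beta} \cdot n^{-\alpha}$, whose positive solution is exactly the prescribed $h = n^{-\alpha/(\alpha+\beta)} (s/r)^{1/(\alpha+\beta)} 2^{(\alpha-\beta)/(\alpha+\beta)}$. With this choice, both exponential factors collapse into a common $e^{-s(2h)^{-\beta}}$, so the estimate in (i) reduces to a single exponential times $\|f\|_{W(\mathcal{C},\ell^2_{v_{r,\alpha}})} + \|\hat{f}\|_{W(\mathcal{C},\ell^2_{v_{s,\beta}})}$, as claimed.

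The whole argument is essentially bookkeeping, since the substantive estimate — the quantitative decay of $\Phi_{v_{r,\alpha}}$ via the incomplete Gamma bound — is already absorbed in Lemma~\ref{lemma:Phi subexp}. The only thing to watch is faithfulness to the explicit constants: writing $(1+h)^{1/2}, (1+p^{-1})^{1/2} \leq \sqrt{2}$ rather than $\leq 2$, and checking that $\sqrt{2} \cdot (2 + 1/\alpha)^{1/2} = (4 + 2/\alpha)^{1/2}$, so that the prefactor matches $c_{r,\alpha} = e^r(4 + 2/\alpha)^{1/2}$ exactly.
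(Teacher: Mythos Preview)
Your proposal is correct and follows essentially the same route as the paper: apply Theorem~\ref{tm:WA} with $v=v_{r,\alpha}$, $w=v_{s,\beta}$, insert the bounds from Lemma~\ref{lemma:Phi subexp}, and track the constants via $v_{r,\alpha}(1)=e^r$ and $(1+h)^{1/2}\leq\sqrt{2}$ to obtain $c_{r,\alpha}=e^r(4+2/\alpha)^{1/2}$, then balance the two exponentials for part~(ii). The paper's own proof is terser (it merely points to the polynomial case and checks the constants), but the substance is identical.
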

\begin{proof}
The proof is analogous to the one for polynomial weights (Corollary~\ref{cor:poly a and b}). We only check  the constants. 
According to Theorem \ref{tm:WA} and Lemma \ref{lemma:Phi subexp}, the constant $c_{r,\alpha}$ can be chosen as 
\begin{equation*}
c_{r,\alpha} =
v_{r,\alpha}(1)(2+\tfrac{1}{\alpha})^{1/2}\sqrt{1+h}\leq e^r(4+\tfrac{2}{\alpha})^{1/2}\,,
\end{equation*}
where we used $\sqrt{1+h}\leq \sqrt{2}$. Likewise, we get the specification of $c_{s,\beta}$.
\end{proof}

\begin{proof}[Proof of Theorem \ref{tm:intro1}]
  Assume that $\sup_{x\in\R}|f(x)|e^{r|x|^\alpha }<\infty$ and $r'=r-\epsilon
  <r$. Then $f\in W(\mathcal{C}, \ell ^2_{r-\epsilon,\alpha })$ for
  every $\epsilon >0$ with the same argument as in \eqref{later1}. 
Therefore the theorem  is a consequence of Corollary
\ref{cor:subexp a and b}. 
\end{proof}

\subsection{Mixed weights}\label{sec:mw}
 We next  discuss sub-exponential decay of $f$ and  polynomial decay of $\hat f$.
Results for sub-exponential decay of $\hat f$ and  polynomial decay of $f$  are obtained by
switching the roles of $f$ and $\hat f$ in the results below. 

Assume that  $f\in W(\mathcal{C},\ell^2_{v_{r,\alpha}})$ and
$\hat{f}\in W(\mathcal{C},\ell^2_{v_{\beta}})$. We feed  the estimates
of  Lemma~\ref{lemma:bound poly} and \ref{lemma:Phi subexp} into
Theorem~\ref{tm:WA} and  obtain
\begin{equation}\label{eq:bound exp with poly}
E^{[n]}_{h}(f)\leq  
e^r\big(4+\tfrac{2}{\alpha}\big)^{1/2} e^{-r(\frac{p}{2})^\alpha} 
\|f\|_{W(\mathcal{C},\ell^2_{v_{r,\alpha}})}+2^{2\beta +1} \big(1+\tfrac{1}{4\beta-2}\big)^{ 1 /2} h^{\beta} \|\hat{f}\|_{W(\mathcal{C},\ell^2_{v_{\beta}})}
\end{equation}
provided that $p\geq \big(\frac{2^\alpha}{2r\alpha}\big)^{1/\alpha}$. 
Both terms contribute in a balanced manner if
$e^{-r(\frac{p}{2})^\alpha} = h^{\beta}$.  Let  $\mathcal{W}$ be the
Lambert $W$-function, i.e., the inverse of $t\mapsto t e^{t}$. Using
$p=nh$ we express $h$ as 
\begin{equation*}
  h = n^{-1} 2(\tfrac{\beta}{\alpha r})^{1/\alpha}\mathcal{W}^{1/\alpha}(\tfrac{\alpha n^\alpha r}{2^\alpha \beta})\,.
\end{equation*}

As for direct decay conditions, if $f$ and $\hat{f}$ are continuous and satisfy \newline $\sup_{x\in\R}|f(x)|e^{r|x|^\alpha}<\infty$ and $\sup_{\xi\in\R}|\hat{f}(\xi)| (1+|\xi|)^{b}<\infty$ with $b>1$, then, for all $r'<r$ and $\beta<b-\frac{1}{2}$, 
\begin{equation}\label{eq:bound exp with poly11}
E^{[n]}_{h}(f)\lesssim e^{-r'(\frac{p}{2})^{\alpha}} \sup_{x\in\R} |f(x)|e^{r|x|} + h^{\beta}\sup_{\xi\in\R} |\hat{f}(\xi)|(1+|\xi|)^b\,.
\end{equation} 
This yields Theorem \ref{thm:mixed intro} in the introduction.

For the exponential case $\alpha=1$, a related  bound for the sup-norm
of $ \hat{f}_{\frac{1}{p},n} - \DFT\! f_{h,n}$ is derived by Briggs
and Henson~\cite[Theorem 6.6]{Briggs}.  However, they   impose
 restrictive boundary conditions.  For a comparable bound
the periodization $\mathcal{P}_p(f{\bf
  1}_{[-\frac{p}{2},\frac{p}{2})]})$ must be $b-2$-times continuously
differentiable on $\R$. These are already violated when $f(p/2) \neq
f(-p/2)$, in which case  \eqref{eq:bound exp with poly11} holds only
with $\beta =1$ regardless of the global smoothness of $f$.

\section{Interpolation}\label{sec:bandlimited}
So far, we have approximated the Fourier samples $\hat{f}(\tfrac{k}{p})$ by the discrete Fourier transform $h\sum_{j\in[n]} f(hj)
e^{-2\pi\mathrm{i} kj /n }= \sqrt{p}\,(\mathcal{F}f_{h,n})(k)$, for $k\in[n]$. 
In this section we change the point of view: we now  interpolate the vector
$\sqrt{p}\,\mathcal{F}f_{h,n}$ and study how the interpolating function
approximates $\hat{f}$  on whole real line 
$\R $.   To do so, we use 
three exemplary interpolation schemes that are often used in
approximation theory, namely piecewise constant and piecewise linear
interpolation, and interpolation using sinc functions.

The standard procedure starts with a cardinal interpolating function:
this is a nice function $\phi $ on $\R$ that satisfies the interpolation
condition $\phi (k) = \delta _{k,0}$. The approximation of $\hat{f}$ on
$\R $ from the discrete Fourier transform is then given by
\begin{equation}
  \label{eq:approx f hat on R}
\Psi _{h,n} (\xi ) = \sqrt p \, \sum _{k\in [n]} \mathcal{F}f_{h,n}(k) \phi
(p\xi -k)
=    h \sum_{k,j\in[n]} f(hj) e^{-2\pi\mathrm{i}\frac{kj}{n}}\phi
(p\xi-k) \, . 
\end{equation}
This expression is an approximation of $\hat f $ on $\R $ that
satisfies $\Psi _{h,n}(\tfrac{k}{p}) = \sqrt p \,
\mathcal{F}f_{h,n}(k)$, and we seek to estimate the global error
$\|\hat f  - \Psi _{h,n}\|_{L^2}$.

As the cardinal interpolating function $\phi $ we will use 
the first two B-splines $B_1={\bf 1}_{[-\frac{1}{2},\frac{1}{2})}$ and
$B_2(x)=(1-|x|){\bf 1}_{[-1,1]}(x)$ and the cardinal sine function
$\sinc (x) = \tfrac{\sin \pi x}{\pi x}$. Clearly, these functions
satisfy $\phi (k) = \delta _{k,0}$. To keep the  notion compact, we set
$B_0:=\sinc$.
For $\phi = B_1$, the right-hand side of \eqref{eq:approx f hat on R} is a step
function, for $\phi = B_2$ we obtain a continuous, piecewise linear
function (this is how many software programs would plot the approximating vector $\mathcal{F}f_{h,n}$), 
and for $\phi = \sinc$ we obtain a
smooth function whose Fourier transform has support in $[-1/2,1/2]$,
which is commonly referred to as a bandlimited function in time-frequency analysis.

 We first derive an error bound for the bandlimited approximation
\eqref{eq:approx f hat on R} with $\phi = \sinc$ for general
weights. This is then specialized to the polynomial and the
sub-exponential weights, resulting in explicit error rates. The
approximation rates for the $B$-splines \eqref{eq:approx f hat on R}
with $i=1,2$ are restricted to decay rates  $\alpha\leq 1$ and $\alpha\leq 2$, respectively.
\begin{theorem}\label{tm:finale again11}
  Assume that $0<h\leq 1\leq p$ and that the weights  $v,w$ satisfy
  \eqref{eq: weights}.  If $f\in W(\mathcal{C},\ell^2_{v})$ and $\hat{f}\in W(\mathcal{C},\ell^2_{w})$,  then 
\begin{equation} \label{hmhm3}
\hspace{-1ex}\Big\|\hat{f} - h \!\!\sum_{k,j\in[n]} \!\!f(hj) e^{-2\pi\mathrm{i}\frac{kj}{n}}\sinc(p \xi-k) \Big\|_{L^2} 
\lesssim  \Phi_v(p)\|f\|_{W(\mathcal{C},\ell^2_{v})}+ 
 \Phi_w(h^{-1})\|\hat{f}\|_{W(\mathcal{C},\ell^2_{w})}\,.
\end{equation}
If $\sinc$ is replaced with $B_i$, for $i=1,2$, then the estimate
\eqref{hmhm3} still holds for the polynomial weight  $v=v_\alpha$
subject to  the restriction $1/2<\alpha\leq i$. 
\end{theorem}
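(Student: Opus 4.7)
The plan is to lift the proof of Theorem \ref{tm:WA} from the discrete error $E^{[n]}_h(f)$ to an $L^2(\R)$ estimate by exploiting that $\{\sqrt{p}\,\sinc(p\cdot-k)\}_{k\in\Z}$ is an orthonormal family in $L^2(\R)$. The key auxiliary object is the sinc-interpolation of the periodized samples of $\hat f$,
\[
\Psi_{h,n}(\xi):=\sum_{k\in[n]}(\cPh\hat f)(k/p)\,\sinc(p\xi-k),
\]
which, by Lemma \ref{lemma:op norm sampling00}, is exactly what one obtains from \eqref{eq:approx f hat on R} if $f_{h,n}$ is replaced by $(\cPp f)_{h,n}$. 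Introducing the auxiliary infinite sinc series $S\hat f(\xi):=\sum_{k\in\Z}\hat f(k/p)\,\sinc(p\xi-k)$, I decompose
\[
\hat f-\Phi_{h,n} \;=\; \bigl(\hat f-S\hat f\bigr)\;+\;\bigl(S\hat f-\Psi_{h,n}\bigr)\;+\;\bigl(\Psi_{h,n}-\Phi_{h,n}\bigr).
\]

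The third and second summands are handled by Parseval plus DFT-unitarity. The third reduces to $\|(\cPp f-f)_{h,n}\|$ and is bounded by Lemma \ref{lemma:op norm sampling}(ii), yielding the $\Phi_v(p)\|f\|_{W(\mathcal{C},\ell^2_v)}$ contribution. The second splits into the kept-sample aliasing error $\|(\hat f-\cPh\hat f)_{1/p,n}\|$ (bounded by Lemma \ref{lemma:op norm sampling}(ii) applied to $\hat f$ with the roles of $h,p$ exchanged) plus the discarded-sample tail $\tfrac1p\sum_{|k|>n/2}|\hat f(k/p)|^2$. For the tail I reuse the sampling inequality $\sum_k|\hat f(k/p)|^2 w(k/p)^2\lesssim p\,\|\hat f\|_{W(\mathcal{C},\ell^2_w)}^2$ already contained in the proof of Lemma \ref{lemma:op norm sampling}(ii), and absorb $w(1/(2h))^{-1}$ into $\Phi_w(h^{-1})$ via the trivial bound $w(1/(2h))^{-1}\leq \Phi_w(h^{-1})/\sqrt{2}$ coming from the $m=0$ term of $\Phi_w$. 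Both pieces combine into $\Phi_w(h^{-1})\|\hat f\|_{W(\mathcal{C},\ell^2_w)}$.

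The first summand is handled by Plancherel. Since $\check{\sinc(p\cdot-k)}(x)=p^{-1}e^{2\pi\mathrm{i}kx/p}\mathbf{1}_{[-p/2,p/2]}(x)$ and the Poisson summation formula yields $\sum_{k\in\Z}\hat f(k/p)\,e^{2\pi\mathrm{i}kx/p}=p\,\cPp f(x)$, one obtains $\check{S\hat f}(x)=\cPp f(x)\,\mathbf{1}_{[-p/2,p/2]}(x)$, and hence
\[
\|\hat f-S\hat f\|_2^2 \;=\; \int_{|x|>p/2}|f(x)|^2\,dx \;+\; \|f-\cPp f\|_{L^2(-p/2,p/2)}^2.
\]
The second term on the right is exactly what Lemma \ref{lemma:op norm sampling}(i) bounds, while the first (truncation tail) is controlled by $v(p/2)^{-2}v(1)^2\|f\|_{W(\mathcal{C},\ell^2_v)}^2$ and absorbed into $\Phi_v(p)^2\|f\|_W^2$ via $v(p/2)^{-1}\leq\Phi_v(p)/\sqrt{2}$. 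A triangle inequality on the three pieces then gives the sinc bound.

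For $B_i$, $i=1,2$, the same three-term decomposition applies: $\{\sqrt p\,B_1(p\cdot-k)\}_k$ is orthonormal in $L^2(\R)$ and $\{\sqrt p\,B_2(p\cdot-k)\}_k$ is a Riesz basis of its span, so the Parseval arguments for the second and third summands carry through with only constant-factor changes. The Plancherel step in the first summand now uses $\hat B_i=\sinc^i$ and gives $\check{S_i\hat f}(x)=\sinc^i(x/p)\,\cPp f(x)$, so everything reduces to bounding $\|f-\sinc^i(\cdot/p)\,\cPp f\|_2$ by $\Phi_{v_\alpha}(p)\|f\|_W$. I expect this to be the main obstacle of the proof and the reason for the restriction $\tfrac12<\alpha\leq i$: the flatness of $\sinc^i$ at the origin delivers only $i$ orders of vanishing, and I would handle the resulting estimate by a Taylor-type expansion of $1-\sinc^i(x/p)$, by splitting the integral at $|x|=p/2$ so that the outside part is controlled by the polynomial decay of $f$, and by invoking Lemma \ref{lemma:op norm sampling}(i) on the inside aliasing piece $\cPp f-f$.
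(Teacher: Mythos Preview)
For the $\sinc$ case your argument is correct and coincides with the paper's proof up to a cosmetic regrouping: the paper uses the intermediates $\sum_{k\in\Z}\hat f(k/p)\sinc(p\cdot-k)$ and $\sum_{k\in[n]}\hat f(k/p)\sinc(p\cdot-k)$, so its third step is exactly $E^{[n]}_h(f)$ and is estimated by Theorem~\ref{tm:WA}; you instead insert $\Psi_{h,n}$, which moves the aliasing term $\|(\hat f-\cPh\hat f)_{1/p,n}\|$ from the third piece into the second. Both routes invoke Lemma~\ref{lemma:op norm sampling}(i),(ii) and the tail bound in the same way.

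The gap is in your treatment of the $B_i$ case. Your Plancherel identity $\check{S_i\hat f}=\sinc^i(\cdot/p)\,\cPp f$ is correct, but the subsequent plan ``split at $|x|=p/2$ so that the outside part is controlled by the polynomial decay of $f$'' breaks down: unlike the $\sinc$ case, $\sinc^i(\cdot/p)\,\cPp f$ is \emph{not} supported in $[-p/2,p/2]$, so the outside integral contains $\int_{|x|>p/2}|\sinc^i(x/p)\,\cPp f(x)|^2\,dx$. Bounding $|\sinc^i(x/p)|$ by $(\pi|x|/p)^{-i}$ and $\cPp f$ by its $L^2$-norm on a period gives only an $O(1)$ contribution, not $O(p^{-2\alpha})$; likewise, Lemma~\ref{lemma:op norm sampling}(i) controls $f-\cPp f$ only on $(-p/2,p/2)$ and says nothing outside. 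One can rescue the estimate by exploiting that $\sinc^i$ vanishes to order $i$ at each nonzero integer, so that on the $l$-th period $|\sinc^i(x/p)|\lesssim |x-lp|^i/(p|l|)^i$, and then showing $\||x|^\alpha\cPp f\|_{L^2(-p/2,p/2)}\lesssim\|f\|_{L^2_{v_\alpha}}$ for $\alpha\le i$ --- but this is substantially more work than your sketch suggests and is precisely where the restriction $\alpha\le i$ enters. The paper avoids this entirely: it does not carry out step~(s1) for $B_i$ by hand but cites the shift-invariant approximation result of Jetter--Zhou~\cite{MR1367171}, which directly yields $\|\hat f-\sum_{k\in\Z}\hat f(k/p)B_i(p\cdot-k)\|_{L^2}\lesssim p^{-\alpha}\|f\|_{L^2_{v_\alpha}}$ for $\alpha\le i$.
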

We note that the error is of the same form as in
Theorem~\ref{tm:WA}. In particular,
(i) for the  polynomial weights  $v=v_\alpha$ and $w=v_\beta$ with $\alpha,\beta>\frac 1 2$ we obtain $\Phi_v(p) \lesssim p^{-\alpha}$ and $\Phi_w(h\inv)\lesssim h^{\beta}$ (see Lemma \ref{lemma:bound poly}), 
 and the choice $h\asymp n^{-\frac{\alpha}{\alpha+\beta}}$ leads to 
\begin{equation*}
\Big\|\hat{f} - h \!\!\sum_{k,j\in[n]} \!\!f(hj) e^{-2\pi\mathrm{i}\frac{kj}{n}}\sinc(p \xi-k) \Big\|_{L^2} \lesssim  n^{-\frac{\alpha \beta}{\alpha+\beta}} 
\left(\|f\|_{W(\mathcal{C},\ell^2_{v_\alpha})}+ 
 \|\hat{f}\|_{W(\mathcal{C},\ell^2_{v_\beta})}\right)\,.
\end{equation*}
(ii) For sub-exponential weights $v=v_{r,\alpha}$ and $w=v_{s,\beta}$ with
$0<\alpha,\beta\leq1$ and $r,s>0$ we obtain $\Phi_v(p) \lesssim e^{-r
  (\frac{p}{2})^{\alpha}}$ and $\Phi_w(h\inv)\lesssim
e^{-s(\frac{1}{2h})^{\beta}}$ (see Lemma \ref{lemma:Phi subexp}).

(iii) For exponential weights $v(x) = e^{r|x|}, w(x) = e^{s|x|}$ and 
the choice $p=\sqrt{n}, h=1/\sqrt{n}$ we obtain the root-exponential
convergence 
$$
\Big\|\hat{f} - h \!\!\sum_{k,j\in[n]} \!\!f(hj)
e^{-2\pi\mathrm{i}\frac{kj}{n}}\sinc(p \xi-k) \Big\|_{L^2} \lesssim
 e^{-d \sqrt{n}} \, ,
 $$
 as in ~\cite{StengerBook}. 
\begin{proof}[Proof of Theorem~\ref{tm:finale again11}]
The proof is based on the following three approximation steps:
\begin{align}
\hat{f}(\xi) \, \overset{(s1)}{\approx} \,
\sum_{k\in\Z} \hat{f}(\tfrac{k}{p}) B_i(p\xi-k)
&\, \overset{(s2)}{\approx} \,
 \sum_{k\in[n]} \hat{f}(\tfrac{k}{p}) B_i(p \xi-k) \notag \\
& \, \overset{(s3)}{\approx} \,
  h \sum_{k,j\in[n]} f(hj)  e^{-2\pi\mathrm{i}\frac{kj}{n}}B_i(p \xi-k)\,. \label{hmhm4}
\end{align}
Here (s1) is the interpolation error, (s2) the truncation error, and
(s3) the discretization error.
We derive  bounds for the error in the steps (s2) and (s3) with respect to general
weights simultaneously for $i=0,1,2$. 

(s3) For $i=0,1$, the system $\{\sqrt{p} B_i(p\xi-k)\}_{k\in\Z}$ is
orthonormal in $L^2(\R)$ and it is still a Riesz sequence for $i=2$
with Riesz bounds independent of $p$ (this can be verified by direct
computation). Therefore, we conclude that 
\begin{align*}
\lefteqn{\Big\|\sum_{k\in[n]} \hat{f}(\tfrac{k}{p}) B_i(p \xi-k) - h
  \sum_{k,j\in[n]} f(hj)
  e^{-2\pi\mathrm{i}\frac{kj}{n}}B_i(p \xi-k)\Big\|_{L^2}} \\
 & = \Big\|  \sum _{k\in [n]} \Big(\tfrac{1}{\sqrt p} \hat{f}(\tfrac{k}{p}) -
  \mathcal{F}f_{h,n}(k)\Big) \, \sqrt p \, B_i(p \xi - k) \Big\|_{L^2}  \asymp \big\|\hat{f}_{\frac{1}{p},n} - \mathcal{F} f_{h,n}\big\|\ = E^{[n]}_{h}(f) \,,
\end{align*}
and this is just the discrete approximation error that has been
treated  in Theorem \ref{tm:WA}.

(s2) Using orthonormality or the Riesz basis property again,  we observe 
\begin{equation*}
\Big\|\sum_{k\in\Z} \hat{f}(\tfrac{k}{p}) B_i(p \xi-k) - \sum_{k\in[n]} \hat{f}(\tfrac{k}{p}) B_i(p \xi-k) \Big\|^2_{L^2} \asymp  p^{-1} \sum_{k\in\Z\setminus[n]} |\hat{f}(\tfrac{k}{p})|^2\,.
\end{equation*}
 The monotonicity \eqref{eq:b} of the weight $w$  implies 
\begin{equation*}
p^{-1}\sum_{k\in\Z\setminus[n]} |\hat{f}(\tfrac{k}{p})|^2 
 \lesssim p^{-1}|w(\tfrac{n}{2p})|^{-2} \sum_{k\in\Z\setminus[n]} |\hat{f}(\tfrac{k}{p})|^2|w(\tfrac{k}{p})|^{2}\,.
\end{equation*}
The sum on the right-hand side  is bounded as in the estimates at the end of the proof of Lemma \ref{lemma:op norm sampling}. We deduce 
\begin{equation*}
p^{-1}\sum_{k\in\Z\setminus[n]} |\hat{f}(\tfrac{k}{p})|^2|w(\tfrac{k}{p})|^{2} \leq p^{-1}\lceil p \rceil |w(1)|^2 \|\hat{f}\|^2_{W(\mathcal{C},\ell^2_{w})}\,.
\end{equation*}
As $|w(\tfrac{n}{2p})|^{-1}=|w(\tfrac{1}{2h})|^{-1} \leq \big(2\sum_{m=0}^\infty |w(\frac{m}{h}+\frac{1}{2h})|^{-2}\big)^{1/2}=\Phi_w(h^{-1})$ we obtain
\begin{equation*}
\Big\|\sum_{k\in\Z} \hat{f}(\tfrac{k}{p}) B_i(p \xi-k) - \sum_{k\in[n]} \hat{f}(\tfrac{k}{p}) B_i(p \xi-k) \Big\|_{L^2} \lesssim \Phi_w(h^{-1})\|\hat{f}\|_{W(\mathcal{C},\ell^2_{w})}\,.
\end{equation*}

(s1) The interpolation error $\|\hat{f}-\sum_{k\in\Z} \hat{f}(\tfrac{k}{p})
B_i(p\xi-k)\|_{L^2}$ relates to classical interpolation of $\hat{f}$
from its samples \cite{butzer-2,butzer1,StengerBook}. We carry out the calculations for $B_0=\sinc$. 
Direct computations and 
Plancherel's formula lead to 
\begin{align*}
\Big\|\hat{f} -  \sum_{k\in\Z} \hat{f}(\tfrac{k}{p}) \sinc(p \xi -k) \Big\|^2_{L^2} & = \Big\|f - \frac{1}{p}\sum_{k\in\Z} \hat{f}(\tfrac{k}{p}) e^{2\pi\mathrm{i} x\frac{k}{p}}{\bf 1}_{[-\frac{p}{2},\frac{p}{2}]}\Big\|^2_{L^2} \\
& = \int_{|x|>\frac{p}{2}} \!|f(x)|^2 \mathrm{d}x + \!
\int_{-\frac{p}{2}}^{\frac{p}{2}} \!\bigg| f(x)- \frac{1}{p}\sum_{k\in\Z} \hat{f}(\tfrac{k}{p}) e^{2\pi\mathrm{i} x \frac{k}{p}}\bigg|^2\mathrm{d}x\,.
\end{align*}
The first term can be estimated as 
\begin{equation*}
\int_{|x|>\frac{p}{2}} \left| f(x)\right|^2 \mathrm{d}x 
\leq 
 |v(\tfrac{p}{2})|^{-2} \|f\|^2_{L^2_{v}}\lesssim \Phi^2_v(p)\|f\|_{W(\mathcal{C},\ell^2_v)}\,,
\end{equation*}
where we used the continuous embedding $W(\mathcal{C},\ell^2_{v})\subseteq L^2_{v}$ and $|v(\tfrac{p}{2})|^{-2}\leq \Phi^2_v(p)$.

For the second term  
we use the Poisson summation formula  
and  apply  Lemma \ref{lemma:op norm sampling}(i), so that  
\begin{align*}
\int_{-\frac{p}{2}}^{\frac{p}{2}} \Big| f(x)- p^{-1}\sum_{k\in\Z}
  \hat{f}(\tfrac{k}{p}) e^{2\pi\mathrm{i} x
  \frac{k}{p}}\Big|^2\mathrm{d}x & = \|f-\cPp f\|^2_{L^2(-\frac{p}{2},\frac{p}{2})}
  \lesssim  \Phi^2_{v}(p)\|f\|^2_{W(\mathcal{C},\ell^2_v)}\, .
\end{align*}
This  provides the  bound for the interpolation error as  
\begin{equation*}
\Big\|\hat{f}-\sum_{k\in\Z} \hat{f}(\tfrac{k}{p}) \sinc(p\xi-k)\Big\|_{L^2}\lesssim \Phi_{v}(p)\|f\|_{W(\mathcal{C},\ell^2_v)}\,.
\end{equation*}

To treat the cases $i=1,2$ of piecewise constant and piecewise linear
interpolation for the polynomial weight $v=v_\alpha$, 
we use  results from \cite[Theorem 3 and Lemma 3]{MR1367171}. For
$\alpha\leq i, i=1,2,$ these  imply
\begin{equation}\label{eq:brambles bound}
\Big\|\hat{f}-\sum_{k\in\Z} \hat{f}(\tfrac{k}{p})
B_i(p\xi-k)\Big\|_{L^2} \lesssim p^{-\alpha}
\|f\|_{L^2_{v_\alpha}} \, . 
\end{equation}

The continuous embedding $W(\mathcal{C},\ell^2_{v_\alpha})\subseteq
L^2_{v_\alpha}$ yields 
\begin{equation*}
\|\hat{f}-\sum_{k\in\Z} \hat{f}(\tfrac{k}{p})
B_i(p\xi-k)\|_{L^2} \lesssim p^{-\alpha}
\|f\|_{W(\mathcal{C},\ell^2_{v_\alpha})}\,.
\end{equation*} 
The final error is the sum of the three errors (s1), (s2), and (s3). 
\end{proof}

\begin{remark}\label{rem:order}
The restriction to $\alpha\leq i$, for $i=1,2$, in the second part of Theorem
\ref{tm:finale again11} stems from the bound \eqref{eq:brambles
  bound}. For stronger polynomial decay $\alpha$, the splines $B_1$
and $B_2$ need to be replaced by a suitable  function $\phi_\alpha$, see
\cite{butzer-2} for examples. We need at least that  $\phi_\alpha$
interpolates on $\Z$ and that 
 its integer shifts  form a Riesz-sequence in $L^2$.
\end{remark}
The proof for the interpolating function $\phi = \sinc$ holds for all
polynomial weights $v_\alpha$ with $\alpha > \frac{1}{2}$. The
$\sinc$-function arises naturally in Fourier approximation, but its
slow decay poses challenges in numerical analysis. Nevertheless, it
has been successfully employed in various numerical methods -- see
Stenger's work~\cite{Stenger81,StengerBook} for comprehensive
treatments of $\sinc$-based techniques.

\section{Minimal requirements for convergence}\label{sec:minimal}
In this section we aim to identify the weakest conditions under which the Fourier transform can be successfully approximated
by the discrete Fourier transform.   Although this question may not be of immediate 
 practical value, it is a fundamental question of intrinsic
 mathematical interest.
 
To identify weak conditions, we now  choose the weaker norm 
\begin{equation}\label{eq:sup now}
\sup_{k\in[n]}\Big|\hat{f}(\tfrac{k}{p}) - h\sum_{j\in[n]} f(hj)e^{-2\pi\mathrm{i} \frac{k j}{n}} \Big|\,
\end{equation}
and ask for sufficient conditions on $f$ and $\hat{f}$ such that
\eqref{eq:sup now} converges to $0$ when $h\rightarrow 0$ and
$p\rightarrow \infty$.  
It turns out that the same general conditions for the validity of the
  Poisson summation formula   also guarantee the
  convergence. 

\begin{proposition}\label{prop:instruction}
If $f,\hat{f}\in W(\mathcal{C},\ell^1)$, then 
\begin{equation*}
\lim_{\substack{h\rightarrow 0\\ p\rightarrow\infty}} \,\sup_{k\in[n]} \Big| \hat{f}(\tfrac{k}{p}) - h\sum_{j\in[n]} f(hj)e^{-2\pi\mathrm{i} \frac{k j}{n}}\Big| =0\,.
\end{equation*}
\end{proposition}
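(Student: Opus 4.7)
The plan is to apply Lemma \ref{lemma:op norm sampling00} pointwise and split the error into an aliasing component (deviation of $\hat f$ from its $h^{-1}$-periodization) and a truncation component (the tail of the samples of $f$). Specifically, for $k\in[n]$,
\begin{equation*}
\hat f(\tfrac{k}{p}) - h\sum_{j\in[n]} f(hj)e^{-2\pi\mathrm{i}\frac{kj}{n}}
= \Bigl[\hat f(\tfrac{k}{p}) - (\cPh \hat f)(\tfrac{k}{p})\Bigr] + h\sum_{j\in\Z\setminus[n]} f(hj)e^{-2\pi\mathrm{i}\frac{kj}{n}},
\end{equation*}
using that, by the lemma, $h\sum_{j\in\Z} f(hj)e^{-2\pi\mathrm{i}\frac{k}{p}hj}=(\cPh \hat f)(\tfrac{k}{p})$. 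The $\sup_{k\in[n]}$ of the absolute value of each bracket will be estimated separately in terms of the Wiener amalgam norms.

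For the aliasing term I note that for $k\in[n]$ one has $\tfrac{k}{p}\in(-\tfrac{1}{2h},\tfrac{1}{2h}]$, so
\begin{equation*}
\bigl|\hat f(\tfrac{k}{p}) - (\cPh \hat f)(\tfrac{k}{p})\bigr|
= \Bigl|\sum_{0\neq l\in\Z}\hat f(\tfrac{k}{p}+\tfrac{l}{h})\Bigr|
\leq \sum_{0\neq l\in\Z}\sup_{\eta\in((2l-1)/(2h),\,(2l+1)/(2h)]}|\hat f(\eta)|.
\end{equation*}
Since the intervals in the right-hand sum are disjoint and cover $\R\setminus(-\tfrac{1}{2h},\tfrac{1}{2h}]$, covering each by unit intervals (with at most $2$-fold overlap at the boundaries) gives a bound of the form $2\sum_{|m|\geq\lfloor 1/(2h)\rfloor-1}\sup_{x\in[m,m+1]}|\hat f(x)|$, which tends to $0$ as $h\to 0$ because $\hat f\in W(\mathcal{C},\ell^1)$.

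For the truncation term, the triangle inequality gives $\sup_{k\in[n]}|h\sum_{j\in\Z\setminus[n]} f(hj)e^{-2\pi\mathrm{i}\frac{kj}{n}}|\leq h\sum_{|hj|\geq p/2}|f(hj)|$. Grouping the samples $hj$ by the unit interval $[m,m+1]$ containing them, and using that at most $\lceil 1/h\rceil\leq 2/h$ samples fall into each unit interval (for $h\leq 1$), this is dominated by $2\sum_{|m|\geq\lfloor p/2\rfloor-1}\sup_{x\in[m,m+1]}|f(x)|$, which tends to $0$ as $p\to\infty$ because $f\in W(\mathcal{C},\ell^1)$. Combining both bounds yields the claim.

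The only mildly delicate step is the bookkeeping that converts the $h$-scale of the samples (and the $1/h$-scale of the periodization of $\hat f$) into the unit scale of the amalgam norm; this is handled by the elementary estimate $h\lceil 1/h\rceil\leq 2$ for $h\leq 1$, together with the observation that the translated fundamental domains for the periodization have disjoint interiors and can be covered by unit intervals with bounded overlap.
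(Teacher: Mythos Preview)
Your proof is correct and follows essentially the same approach as the paper: apply Lemma~\ref{lemma:op norm sampling00} to split the pointwise error into an aliasing term $\hat f(\tfrac{k}{p})-(\cPh\hat f)(\tfrac{k}{p})$ and a truncation term $h\sum_{j\in\Z\setminus[n]}f(hj)e^{-2\pi\mathrm{i}kj/n}$, then bound each by a tail of the Wiener amalgam norm. The paper frames the split via the norm inequality $\sqrt{p}\|\hat f_{1/p,n}-\DFT f_{h,n}\|_\infty\leq\sqrt{p}\|(\hat f-\cPh\hat f)_{1/p,n}\|_\infty+\sqrt{h}\|(f-\cPp f)_{h,n}\|_1$ and estimates the aliasing term directly as $\sum_{|l|\geq 1/(2h)}\sup_{\xi\in[0,1]}|\hat f(\xi+l)|$, whereas you pass through the disjoint fundamental domains $((2l-1)/(2h),(2l+1)/(2h)]$ first; both routes are equivalent, and your truncation estimate with $h\lceil 1/h\rceil\leq 2$ is exactly the paper's.
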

The limit above is taken in the sense that $h\to 0$ and $p^{-1} \to
0$ in arbitrary relation to each other. In particular, it implies that $n=\frac{p}{h}\rightarrow\infty$.
\begin{proof}
Following the general approach outlined in Section \ref{sec:decomp}, we 
decompose the error $\sup_{k\in[n]}\Big|\hat{f}(\tfrac{k}{p}) -
h\sum_{j\in[n]} f(hj)e^{-2\pi\mathrm{i} \frac{k j}{n}} \Big| =
\sqrt{p} \,\|\hat{f}_{\frac{1}{p},n} - \mathcal{F} f_{h,n}\|_\infty$
into a time and a frequency component.  Since we are taking limits, we
may assume without loss of generality that $h\leq 1$.

As in (4.6),    
we have 
\begin{equation}\label{eq:tf comps}
\|\hat{f}_{\frac{1}{p},n} - \mathcal{F} f_{h,n}\|_\infty \leq \|\hat{f}_{\frac{1}{p},n}- (\cPh \hat{f})_{\frac{1}{p},n} \|_\infty + \|\mathcal{F} (\cPp f)_{h,n} - \DFT\! f_{h,n}\|_\infty\,.
\end{equation}
Since  the discrete Fourier transform satisfies
$\|\mathcal{F}y\|_\infty \leq \frac{1}{\sqrt{n}} \|y\|_1$ for
$y\in\C^n$, 
we obtain
\begin{equation}\label{eq:tf comp}
\sup_{k\in[n]}\Big|\hat{f}(\tfrac{k}{p}) -
h\sum_{j\in[n]} f(hj)e^{-2\pi\mathrm{i} \frac{k j}{n}} \Big|
\leq \sqrt{p}\|(\hat{f} - \cPh \hat{f})_{\frac{1}{p},n} \|_\infty +
\sqrt{h}\| (f-\cPp f)_{h,n}\|_1\, ,
\end{equation}
which   splits the error into a time component  and a frequency
component as before.

It remains to obtain suitable decay of samples of $\hat{f} - \cPh
\hat{f}$ and $f-\cPp f$. We start with the frequency component
$\sqrt{p}\|(\hat{f} - \cPh \hat{f})_{\frac{1}{p},n} \|_\infty$. The
observation $|\frac{k}{p}|\leq \frac{1}{2h}$ for $k\in[n]$ and  the
assumption  $h\leq 1$ lead to 
\begin{align} \label{hmhm5}
\sqrt{p}\|(\hat{f} - \cPh \hat{f})_{\frac{1}{p},n}\|_\infty & =  \sup_{k\in[n]} \Big|\sum_{l\in\Z\setminus\{0\}} \hat{f}(\tfrac{k}{p}+h^{-1}l)  \Big| \leq \sum_{|l|\geq \frac{1}{2h}} \sup_{\xi\in[0,1]} |\hat{f}(\xi+l)|\,.
\end{align}
Since $\hat{f}\in W(\mathcal{C},\ell^1)$, this sum tends to zero for $h\rightarrow 0$. 

To estimate the time component $\sqrt{h}\| (f-\cPp f)_{h,n}\|_1$, we compute
\begin{equation*}
\sqrt{h} \| (f-\cPp f)_{h,n}\|_1  = h \sum_{j\in[n]} \Big| \sum_{l\in\Z\setminus\{0\}} f(hj+pl)\Big| \leq h \sum_{j\in\Z\setminus[n]} |f(hj)|\,.
\end{equation*}
Since $h j\in [l,l+1)$ occurs for at most $\lceil
h^{-1}\rceil$ many integers $j$, we obtain  
\begin{equation*}
 \sqrt{h} \| (f-\cPp f)_{h,n}\|_1\leq h \lceil h^{-1}\rceil \sum_{|l|\geq \frac{p}{2}} \sup_{x\in[0,1]} |f(x+l)|\,.
 \end{equation*}
Since $f\in W(\mathcal{C},\ell^1)$, the series tends to zero for
$p\rightarrow \infty$. The factor $h \lceil h^{-1}\rceil \leq
(1+h)\leq 2$ does not cause any issues because  $h\rightarrow 0$. 
\end{proof}

In previous sections, we considered the $\ell^2$-error $E^{[n]}_h(f)$, which is stronger than \eqref{eq:sup now}. 
Since $\|y\|_2 \leq \sqrt n \, \|y\|_\infty $ for $y\in \C ^n$,
Proposition~\ref{prop:instruction} also implies that  for $f,\hat{f}\in W(\mathcal{C},\ell^1)$,
$$
\lim _{\substack{h\rightarrow 0\\ p\rightarrow\infty}}\sqrt h \,
E^{[n]}_h(f)  \leq  \lim_{\substack{h\rightarrow 0\\ p\rightarrow\infty}}
\sup_{k\in[n]} \Big| \hat{f}(\tfrac{k}{p}) - h\sum_{j\in[n]}
f(hj)e^{-2\pi\mathrm{i} \frac{k j}{n}}\Big| \rightarrow 0 \, ,
$$
so that  an additional factor $\sqrt{h}$ is needed for convergence. 

Parallel to Section~\ref{sec:bandlimited} we now investigate when the
interpolation error of $\hat{f}$  in
\eqref{eq:approx f hat on R} 
converges to zero.     
\begin{proposition}\label{prop:Kaiblinger}
If $f,\hat{f}\in W(\mathcal{C},\ell^1)$, then for interpolation with
step functions $B_1$ and with piecewise linear functions $B_2$ we
obtain 
\begin{equation}\label{eq:2}
\lim_{\substack{h\rightarrow 0\\ p\rightarrow\infty}} \,\sup_{\xi\in\R}\Big| \hat{f}(\xi) - h \sum_{k,j\in[n]} f(hj) e^{-2\pi\mathrm{i}\frac{kj}{n}} B_i(p\xi-k) \Big | = 0\,,\qquad i=1,2.
\end{equation}
\end{proposition}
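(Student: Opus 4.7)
The plan is to follow the three-step decomposition in the proof of Theorem \ref{tm:finale again11}, but now to estimate each piece directly in the sup norm. Denote the approximation in \eqref{eq:2} by $\Phi_{h,n}(\xi)= h\sum_{k,j\in[n]}f(hj)e^{-2\pi\mathrm{i}\frac{kj}{n}}B_i(p\xi-k)$ and split $\hat{f}(\xi) - \Phi_{h,n}(\xi) = T_1(\xi)+T_2(\xi)+T_3(\xi)$ with
\begin{align*}
T_1(\xi) &= \hat{f}(\xi) - \sum_{k\in\Z} \hat{f}(\tfrac{k}{p}) B_i(p\xi-k),\\
T_2(\xi) &= \sum_{k\in\Z\setminus[n]} \hat{f}(\tfrac{k}{p}) B_i(p\xi-k),\\
T_3(\xi) &= \sum_{k\in[n]} \bigl(\hat{f}(\tfrac{k}{p}) - \sqrt{p}(\mathcal{F}f_{h,n})(k)\bigr) B_i(p\xi-k).
\end{align*}

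The structural feature driving each of these sup-norm bounds is the compact support $\supp B_i\subseteq[-1,1]$ together with $|B_i|\leq 1$, so at every fixed $\xi$ at most two summands in each series are nonzero. For $T_3$ this gives $\|T_3\|_\infty \leq 2\sup_{k\in[n]}|\hat{f}(\tfrac{k}{p}) - \sqrt{p}(\mathcal{F}f_{h,n})(k)|$, which vanishes as $h\to 0,\,p\to\infty$ directly by Proposition \ref{prop:instruction}. For $T_2$, a contributing index $k\in\Z\setminus[n]$ must satisfy both $|k-p\xi|\leq 1$ and $|k|\geq n/2$, hence $|\tfrac{k}{p}|\geq \tfrac{1}{2h}-\tfrac{1}{p}$, and therefore $\|T_2\|_\infty \leq 2\sup_{|\eta|\geq \frac{1}{2h}-\frac{1}{p}}|\hat{f}(\eta)|$. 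The assumption $\hat{f}\in W(\mathcal{C},\ell^1)$ forces $\sup_{|\eta|\geq L}|\hat{f}(\eta)|\to 0$ as $L\to\infty$, so $\|T_2\|_\infty \to 0$ as $h\to 0$.

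The remaining term $T_1$ is the classical $B_i$-spline interpolation error for $\hat{f}$ on the grid $\tfrac{1}{p}\Z$: for $i=1$ one has $\sum_{k\in\Z}\hat{f}(\tfrac{k}{p})B_1(p\xi-k) = \hat{f}(\tfrac{k_0}{p})$ where $|p\xi-k_0|\leq \tfrac 1 2$, while for $i=2$ one obtains the piecewise linear interpolant $(1-t)\hat{f}(\tfrac{k_0}{p}) + t\hat{f}(\tfrac{k_0+1}{p})$ with $t=p\xi-k_0\in[0,1]$. In both cases $|T_1(\xi)|\leq \sup_{|x-y|\leq 1/p}|\hat{f}(x) - \hat{f}(y)|$, so it suffices to prove that $\hat{f}$ is uniformly continuous on $\R$.

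The main obstacle is precisely this uniform continuity, which does not follow from continuity of $\hat{f}$ alone. The hypothesis $\hat{f}\in W(\mathcal{C},\ell^1)$, however, implies that for every $\epsilon>0$ one can choose $L$ with $\sup_{|x|>L}|\hat{f}(x)|<\epsilon/4$, while Heine-Cantor yields uniform continuity on $[-L-1,L+1]$. A standard splitting argument combining these two ingredients produces uniform continuity on all of $\R$, so $\sup_{|x-y|\leq 1/p}|\hat{f}(x) - \hat{f}(y)|\to 0$ as $p\to\infty$. Adding the bounds on $T_1,T_2,T_3$ then yields \eqref{eq:2}.
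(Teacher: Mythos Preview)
Your proof is correct and follows essentially the same three-step decomposition as the paper's proof. The only differences are cosmetic: where the paper exploits the partition-of-unity property $\sum_k B_i(p\xi-k)=1$ (with $B_i\geq 0$) to bound the $T_2$ and $T_3$ pieces without the factor $2$, you use the equivalent observation that at most two shifts of $B_i$ overlap any point; and you spell out the uniform-continuity argument for $\hat f$ that the paper merely asserts.
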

\begin{proof}
As in the proof of Theorem~\ref{tm:finale again11} we split the
approximation error  into interpolation, truncation, and
 discretization via the discrete Fourier transform as follows: 
\begin{align*}
\hat{f}(\xi) \, \overset{(a1)}{\approx} \, \sum_{k\in\Z}
   \hat{f}(\tfrac{k}{p}) B_i(p\xi-k)&\, \overset{(a2)}{\approx} \,  \sum_{k\in[n]}
  \hat{f}(\tfrac{k}{p})  B_i(p\xi-k) \\ &\, \overset{(a3)}{\approx} \,h
    \sum_{k,j\in[n]} f(hj) e^{-2\pi\mathrm{i}\frac{kj}{n}} B_i(p\xi-k)\, .
  \end{align*}
To estimate the interpolation error  $(a1)$, we use the fact that
$B_i$ has compact support, $\supp(B_i)\subseteq[-1,1]$, and that  the integer
shifts of $B_i$, $i=1,2$, form  a partition of unity. This leads to 
\begin{align*}
\Big| \hat{f}(\xi)- \sum_{k\in\Z} \hat{f}(\tfrac{k}{p}) \,  B_i(p\xi
  -k)\Big|  & =\Big|\sum_{k\in\Z}
              (\hat{f}(\xi)-\hat{f}(\tfrac{k}{p}))\, B_i(p\xi-k)\Big|\\
& \leq  \sum_{\substack{k\in\Z \\|\xi-\frac{k}{p}|\leq \frac{1}{p}}}
  |\hat{f}(\xi)-\hat{f}(\tfrac{k}{p})|\, B_i(p\xi-k)\\
& \leq \sup _{k\in \Z} \sup _{\xi \in \R: |\xi - \frac{k}{p}|\leq \frac{1}{p}}|\hat{f}(\xi)-\hat{f}(\tfrac{k}{p})| \sum  _{k\in\Z} \, B_i(p\xi-k)\\
  &\leq \sup _{\xi  , \eta \in \R: |\xi - \eta | \leq \frac{1}{p}} |\hat
    f(\xi ) - \hat{f}(\eta )| \,.                                                                         
\end{align*}
Since $f\in W(\mathcal{C}, \ell ^1)$ is uniformly continuous and
$p\rightarrow \infty$, we conclude that  
\begin{equation*}
\lim_{p\rightarrow\infty} \sup_{\xi\in\R} \Big|\hat{f}(\xi)-\sum_{k\in\Z}\hat{f}(\tfrac{k}{p})\, B_i(p\xi-k)\Big|=0\,.
\end{equation*}

For the truncation error  $(a2)$  we use $\|B_i\|_\infty =1$ and find  for all $\xi \in \R $ that 
\begin{align*}
\Big|\sum_{k\in\Z}\hat{f}(\tfrac{k}{p}) \, B_i(p\xi-k)-\sum_{k\in[n]}\hat{f}(\tfrac{k}{p}) \, B_i(p\xi-k) \Big| &=  \Big|\sum_{k\in\Z\setminus[n]}\hat{f}(\tfrac{k}{p}) \, B_i(p\xi-k) \Big| \\
&\leq \|B_i\|_\infty  \sum_{\substack{k\in\Z\setminus[n]\\ |\xi-\frac{k}{p}|\leq \frac{1}{p}}}\big|\hat{f}(\tfrac{k}{p}) \big| \\
&
\leq  \sum_{k\in\Z\setminus[n]}\big|\hat{f}(\tfrac{k}{p}) \big| \, .
\end{align*}
Since $p\geq 1$ and 
$\frac{|k|}{p} \geq \frac{n}{2} \, \frac{1}{nh} = \frac{1}{2h}$ for
$k\in [n]$, the last sum is estimated  as in \eqref{hmhm5} by
$$
\sum_{k\in\Z\setminus[n]}\big|\hat{f}(\tfrac{k}{p}) \big| \leq \sum
_{|l|\geq \frac{1}{2h}} \sup _{\xi \in [0,1]} |\hat f (\xi +l)| \to 0
\, .
$$
Since $\hat f\in W(\mathcal{C},\ell ^1)$, we obtain
\begin{equation*}
\lim_{h\rightarrow 0}\sup_{\xi\in\R}\Big|\sum_{k\in\Z}\hat{f}(\tfrac{k}{p}) \, B_i(p\xi-k)-\sum_{k\in[n]}\hat{f}(\tfrac{k}{p}) \, B_i(p\xi-k) \Big| =0\,.
\end{equation*}

For the discretization error  $(a3)$, we estimate 
\begin{align*}
& \hspace{-2cm}\bigg|\sum_{k\in[n]} \Big(\hat{f}(\tfrac{k}{p}) - h \sum_{j\in[n]} f(hj) e^{-2\pi\mathrm{i}\frac{kj}{n}}\Big) \, B_i(p\xi-k) \bigg| \\
&\qquad \leq \sup_{k\in[n]} \Big|\hat{f}(\tfrac{k}{p}) - h \sum_{j\in[n]} f(hj) e^{-2\pi\mathrm{i}\frac{kj}{n}} \Big|\,  \Big|\sum_{k\in[n]}  B_i(p\xi-k) \Big| \\
&\qquad \leq \sup_{k\in[n]} \Big|\hat{f}(\tfrac{k}{p}) - h \sum_{j\in[n]} f(hj) e^{-2\pi\mathrm{i}\frac{kj}{n}} \Big|\, ,
\end{align*}
where we have used again that the integer translates of the cardinal $B$-splines form a partition of unity. According to Proposition \ref{prop:instruction} this term vanishes when $p\rightarrow \infty$ and $h\rightarrow 0$.

The final error is the sum of the three errors (a1), (a2), and (a3). 
\end{proof}

\begin{remark}
(i) The formulation of  Proposition~\ref{prop:c6} in the introduction
  follows, because the condition $\sup _{x\in \R } |f(x)|
  (1+|x|)^{1+\epsilon } <\infty$ implies that $f\in W(\C, \ell ^1)$.

  (ii) Convergence in a stronger norm for the choice $p=1/h = \sqrt n$ and under stronger conditions on $f$ and $\hat f$ was 
derived in \cite{Kaiblinger05} for the piecewise linear interpolation
with $B_2$.  
Proposition~\ref{prop:Kaiblinger} extends the applicability of the main result in \cite{Kaiblinger05} to a larger class of functions. 
For the rather
non-trivial  comparison
of the condition $f,\hat f \in W(\mathcal{C}, \ell ^1)$ to the one in
\cite{Kaiblinger05} we refer to  \cite[Theorem
2]{Losert80}. 

(iii) For $B_0=\sinc $ the above proof of Proposition 
\ref{prop:Kaiblinger} breaks down at several steps, because the
integer shifts of $\sinc $  are not summable. We do not know whether Proposition
\ref{prop:Kaiblinger} remains true in this case. 
\end{remark}

\end{document}